\DeclareSymbolFont{boldlargesymbols}{LMX}{ntxexx}{b}{n}
\DeclareMathAccent{\bwidetilde}{\mathord}{boldlargesymbols}{"65}
\numberwithin{equation}{section}
\theoremstyle{thmstyleone}%
\newtheorem{theorem}{Theorem}[section]% meant for sectionwise numbers
\newtheorem{proposition}[theorem]{Proposition}% 
\newtheorem{lemma}[theorem]{Lemma}% 
\newtheorem{cor}[theorem]{Corollary}
\theoremstyle{definition}%
\newtheorem{definition}[theorem]{Definition}%
\newtheorem{example}[theorem]{Example}%
\newtheorem{remark}[theorem]{Remark}%
\begin{document}

\title[Geodesic Connectedness on Statistical Manifolds with Divisible Cubic Forms]{Geodesic Connectedness on Statistical Manifolds with Divisible Cubic Forms}

%%=============================================================%%
%% GivenName	-> \fnm{Joergen W.}
%% Particle	-> \spfx{van der} -> surname prefix
%% FamilyName	-> \sur{Ploeg}
%% Suffix	-> \sfx{IV}
%% \author*[1,2]{\fnm{Joergen W.} \spfx{van der} \sur{Ploeg} 
%%  \sfx{IV}}\email{iauthor@gmail.com}
%%=============================================================%%

\author*[1]{\fnm{Ryu} \sur{Ueno}}

\affil*[1]{\orgdiv{Department of Mathematics}, 
\orgname{Hokkaido University}, 
\orgaddress{%\street{Street}, 
\city{Sapporo}, \postcode{060-0810}, 
%\state{State}, 
\country{Japan}}}

%%==================================%%
%% Sample for unstructured abstract %%
%%==================================%%

\abstract{A statistical manifold is just a manifold equipped with a Riemannian metric and a torsion-free affine connection that satisfies the Codazzi equation.
Although the term originates from information geometry, statistical manifolds have long existed in affine differential geometry.
The class of statistical manifolds with divisible cubic forms arises from affine differential geometry.
We examine the geodesic connectedness of affine connections on this class of statistical manifolds.
In information geometry, the geodesic connectedness of the affine connections are often assumed.
In Riemannian geometry, the geodesic connectedness of the Levi-Civita connection follows from its geodesic completeness by the well-known Hopf-Rinow theorem.
However, the geodesic connectedness of general affine connections is more challenging to achieve, even for the Levi-Civita connection in pseudo-Riemannian geometry or for affine connections on compact manifolds.
By analogy with the Hopf-Rinow theorem in Riemannian geometry, we establish the geodesic connectedness of the affine connections on statistical manifolds with divisible cubic forms from their geodesic completeness.
As an application, we establish a Cartan-Hadamard type theorem for statistical manifolds.}

\keywords{Affine connection, Completeness, Geodesic connectedness, Statistical structure}

%%\pacs[JEL Classification]{D8, H51}

\pacs[MSC Classification]{53A15, 53B05, 53B12, 53C22}

\maketitle

\section{Introduction}\label{sec1}

We derive the geodesic connectedness of affine connections on statistical manifolds from its completeness for the first time.
% As an application, we also derive a statistical manifold version of the Cartan-Hadamard type theorem.
The term \textit{statistical manifold} originates from information geometry, but it has also appeared in affine differential geometry. 
A statistical structure on a manifold $M$ is defined by a pair $(g,\nabla)$, where $g$ is a Riemannian metric, and $\nabla$ is an affine connection on $M$.
This structure is in one-to-one correspondence with the pair $(g,C)$, where $C$ is a totally symmetric $(0,3)$-type tensor field on $M$, called the \textit{cubic form}.
In other words, a statistical structure can be considered as the pair of a cubic form and a Riemannian metric.\\

We refer to geodesic completeness of affine connections as simply \textit{completeness}. 
The completeness of the Levi-Civita connection on a Riemannian manifold is characterized by the Hopf-Rinow theorem. 
In particular, this theorem implies that if the Levi-Civita connection of a Riemannian manifold is complete, then it is \textit{geodesically connected}, that is, any two points on the manifold can be connected by a geodesic.
However, this does not hold for general affine connections. 
A well-known counterexample in pseudo-Riemannian geometry is the de Sitter space \cite{oneill1983semiriemannian}.
Moreover, even a complete affine connection on a compact manifold is not necessarily geodesically connected \cite{BATES1998273}.
The geodesic connectedness of an affine connection in the non-Riemannian setting has been widely studied from various perspectives \cite{SANCHEZ20013085, BARTOLO2011779, SIFJ, Beem1989PseudoconvexityAG}.
In information geometry, the existence of geodesics are often assumed, such as in the generalized Pythagorean theorem \cite{MR1800071} and constructing the canonical contrast function in \cite{Felice2018TowardsAC}. 
There are complete yet not geodesically connected affine connections in the geometry of statistical manifolds, as we demonstrate in Example \ref{countex}. 
However, to the author's knowledge, no research has been conducted on geodesic connectedness for the affine connections on statistical manifolds. \\

We establish the Hopf-Rinow type theorem for statistical manifolds with divisible cubic forms, that is, we derive the geodesic connectedness of the affine connection from its completeness.
A statistical manifold $(M,g,\nabla)$ is said to have \textit{divisible cubic form} $C$ if there exists some smooth function $\sigma$ on $M$ such that $C=\operatorname{sym}(d\sigma\otimes g)$.
Our main result is the following$:$\\

\textbf{Theorem}~\ref{main2}.
Assume that $\sigma$ is bounded from below on $M$. 
If $\nabla$ is complete, then it is geodesically connected.\\

Statistical manifolds with divisible cubic forms naturally arise in affine immersions whose image lies on a quadratic hypersurface.
The first study of such statistical manifolds was conducted by M. Noguchi \cite{NOGUCHI1992197}, where they were referred to as \textit{special statistical manifolds}. 
More recently, this statistical manifold has also appeared in the study of weighted Riemannian manifolds \cite{Li2016AnIF}.
However, since Noguchi's work, little research has been done on their deeper properties; they have only been considered as specific examples of statistical manifolds in \cite{BO, MIN201539, MeliCalvinB.2023Ssai}.
We reveal that statistical manifolds with divisible cubic forms possess many favorable properties.
In particular, significant progress was made in understanding the geodesics of the affine connection.\\

The structure of this paper is as follows. 
In \S 2, we provide a summary of fundamental concepts of statistical manifolds. 
In \S 3, we introduce the definition and properties of statistical manifolds with divisible cubic forms, where Subsection 3.1 mostly reviews known results, and Subsection 3.2 presents new findings. 
% Specifically, we show that the connection of a statistical manifold with divisible cubic form is projectively equivalent to the Levi-Civita connection of a certain Riemannian metric, and we prove the existence of a volume form and a canonical contrast function.
In \S 4, we prove the Hopf-Rinow type theorem for statistical manifolds with divisible cubic forms, along with examples from centro-affine differential geometry.
Finally, in \S 5 we apply these results to derive a statistical manifold version of the Cartan-Hadamard type theorem.\\

\section{Preliminaries}\label{sec2}

In this section, we organize definitions and notations in this paper. 
All objects in this paper are assumed to be smooth, and $M$ or $M^n$ denotes a connected orientable manifold of dimension $n\geq2$.
On a vector bundle $E$ over some manifold, we denote by $\Gamma(E)$ the set of sections of $E$.
\subsection{Statistical manifolds}
Let $g$ be a Riemannian metric on $M$, and $\nabla$ a torsion-free affine connection on $M$.
If the \textit{Cubic form} $C=\nabla g$ is totally symmetric on $M$, the triplet $(M,g,\nabla)$ is called a \textit{statistical manifold}.
Here, the pair $(g,\nabla)$ is called the \textit{statistical structure} on $M$.
The Levi-Civita connection of $g$ will be denoted as $\nabla^g$.
Another torsion-free affine connection $\overline{\nabla}$, called the \textit{conjugate connection} is naturally defined on the statistical manifold by the following equation:
\begin{equation*}
    Xg(Y,Z)=g(\nabla_X Y,Z)+g(Y,\overline{\nabla}_X Z),\quad X,Y,Z\in\Gamma(TM).
\end{equation*}
The triplet $(M,g,\overline{\nabla})$ is also a statistical manifold, and the Cubic form $\overline{C}$ satisfies $\overline{C} = -C$.\\

The difference tensor $K$ on $(M,g,\nabla)$ is defined by 
\begin{equation}
    \label{difftendef}
    K_X Y = K(X,Y) = \nabla_X Y - \nabla^g_X Y,\quad X,Y\in\Gamma(TM).
\end{equation}
The cubic form $C$ and the difference tensor $K$ are related by the following equation$:$
\begin{equation}
    \label{relcubdiff}
    C(X,Y,Z) = -2g(K_X Y,Z),\quad X,Y,Z\in\Gamma(TM).
\end{equation}
Therefore, the conditions $\nabla g = 0$ and $K=0$ are equivalent. 
In this case, the statistical manifold is called \textit{Riemannian}.\\

Conversely, given a totally symmetric $(0,3)$-tensor $C$ on a Riemannian manifold $(M,g)$, one can define a statistical manifold $(M,g,\nabla)$ using equations $(\ref{difftendef})$ and $(\ref{relcubdiff})$.
Therefore, the pair $(g,C)$ can be considered as the statistical structure.\\

For an affine connection $\nabla$ on $M$, we define the \textit{curvature tensor field} $R^\nabla \in \Gamma(TM^{(1,3)})$ 
of $\nabla$ by
\begin{equation*}
    R^\nabla (X,Y)Z=\nabla_X \nabla_Y Z-\nabla_Y \nabla_X Z-\nabla_{[X,Y]}Z,
\quad X, Y, Z \in \Gamma(TM). 
\end{equation*}
On a statistical manifold $(M,g,\nabla)$, we often denote $R^{\nabla}$ by $R$, 
$R^{\nabla^g}$ by $R^g$, and $R^{\overline{\nabla}}$ by $\overline{R}$, for convinience. 
For $X,Y,Z,W\in\Gamma(TM)$, the curvature tensor fields are related as follows:
\begin{eqnarray}
    &&
    g(R(X,Y)Z,W) = -g(Z,\overline{R}(X,Y)W), \label{intcurvature}\\
    &&
    R^g(X,Y)Z = R(X,Y)Z + (\nabla^g_X K)(Y,Z) - (\nabla^g_Y K)(X,Z) + \lbrack K_X,K_Y \rbrack Z, \label{Levicurvandstat}\\
    &&
    2R^g(X,Y)Z = R(X,Y)Z + \overline{R}(X,Y)Z + 2[K_X,K_Y]Z.\label{addcurv}
\end{eqnarray}
    If we have $R=\overline{R}$, then the statistical manifold is said to be \textit{conjugate symmetric}. \\
    \begin{remark}
        The conjugate symmetricity of $(M,g,\nabla)$ is equivalent to each of the following conditions$:$
        \renewcommand{\labelenumi}{$\operatorname{(\theenumi)}$}
        \begin{enumerate}
            \item $g(R(X,Y)Z,W) = -g(Z,R(X,Y)W), \quad X,Y,Z,W\in\Gamma(TM)$.
            \item $\nabla^gK$ is totally symmetric on $M$.
        \end{enumerate}
    \end{remark}
\noindent
We define the \textit{Ricci curvature tensor field} $\operatorname{Ric}^{\nabla}\in\Gamma(TM^{(0,2)})$ of $\nabla$ by
    \begin{equation*}
        \operatorname{Ric}^{\nabla}(X,Y)=\sum_{i=1}^ng(R^{\nabla}(e_i,X)Y,e_i),\quad X,Y\in\Gamma(TM), 
    \end{equation*}
where $\{e_1,\dots,e_n\}$ is an orthonormal frame on $M$ with respect to $g$. 
On a statistical manifold $(M,g,\nabla)$, we often denote $\operatorname{Ric}^{\nabla}$ by $\operatorname{Ric}$, 
$\operatorname{Ric}^{\nabla^g}$ by $\operatorname{Ric}^g$, and $\operatorname{Ric}^{\overline{\nabla}}$ by $\overline{\operatorname{Ric}}$ for convinience. \\
\begin{definition}
    A statistical manifold $(M,g,\nabla)$ is said to be of \textit{constant sectional curvature} $\lambda$ if the following equation hold for some real number $\lambda$$:$
    \begin{equation*}
        R(X,Y)Z=\lambda(g(Y,Z)X-g(X,Z)Y),\quad X,Y,Z\in\Gamma(TM).
    \end{equation*}
\end{definition}

The next proposition is known for statistical manifolds of constant sectional curvature \cite{MR4452143}.\\
\begin{proposition}
    Let $(M,g,\nabla)$ be a statistical manifold. The following conditions are mutually equivalent$:$
    \renewcommand{\labelenumi}{$\operatorname{(\theenumi)}$}
    \begin{enumerate}
        \item It has constant sectional curvature.
        \item It is conjugate symmeteric and $\nabla$ is projectively flat.
    \end{enumerate}
\end{proposition}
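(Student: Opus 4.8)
The plan is to establish the two implications separately, using the three equivalent descriptions of conjugate symmetry recorded in the Remark above, the identity $(\ref{Levicurvandstat})$ relating $R$, $R^g$ and the difference tensor $K$, and the standard tensorial criterion for projective flatness: a torsion-free connection is projectively flat precisely when its projective Weyl tensor vanishes and the normalized Ricci tensor satisfies the associated Codazzi condition.

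For $(1)\Rightarrow(2)$ I would argue by direct substitution. From $R(X,Y)Z=\lambda\big(g(Y,Z)X-g(X,Z)Y\big)$ a one-line trace gives $\operatorname{Ric}=\lambda(n-1)g$; inserting this into the projective Weyl tensor makes it vanish, and the remaining Codazzi condition for the normalized Ricci $\lambda g$ collapses to $\lambda C(X,Y,Z)=\lambda C(Y,X,Z)$, which holds because $C=\nabla g$ is totally symmetric. Conjugate symmetry then follows from condition (1) of the Remark, since both $g(R(X,Y)Z,W)$ and $-g(Z,R(X,Y)W)$ equal $\lambda\big(g(Y,Z)g(X,W)-g(X,Z)g(Y,W)\big)$.

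For the substantive direction $(2)\Rightarrow(1)$ I would proceed in four steps. First, conjugate symmetry forces $\operatorname{Ric}$ to be symmetric: under the total symmetry of $\nabla^gK$ (condition (2) of the Remark) the two middle terms in $(\ref{Levicurvandstat})$ cancel, leaving $R^g(X,Y)Z=R(X,Y)Z+[K_X,K_Y]Z$, whose $g$-trace expresses $\operatorname{Ric}$ as $\operatorname{Ric}^g$ minus a term that is manifestly symmetric once one uses that each $K_X$ is $g$-self-adjoint and that $K_XY=K_YX$. Second, projective flatness together with the symmetry of $\operatorname{Ric}$ gives the algebraic form $R(X,Y)Z=\tfrac{1}{n-1}\big(\operatorname{Ric}(Y,Z)X-\operatorname{Ric}(X,Z)Y\big)$, which is automatic when $n=2$ and equivalent to the vanishing of the projective Weyl tensor when $n\ge3$. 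Third, imposing the $g$-skew-symmetry of $R(X,Y)$ from condition (1) of the Remark and contracting the resulting relation over one pair of indices collapses it to $\operatorname{Ric}=\tfrac{1}{n}(\operatorname{tr}_g\operatorname{Ric})\,g$ pointwise, so that $R(X,Y)Z=\psi\big(g(Y,Z)X-g(X,Z)Y\big)$ for a smooth function $\psi$ with $\operatorname{Ric}=(n-1)\psi\,g$. Finally, the Codazzi condition on the normalized Ricci supplied by projective flatness reads $(\nabla_X\operatorname{Ric})(Y,Z)=(\nabla_Y\operatorname{Ric})(X,Z)$; substituting $\operatorname{Ric}=(n-1)\psi g$ and cancelling the cubic-form contributions by the total symmetry of $C$ leaves $(X\psi)\,g(Y,Z)=(Y\psi)\,g(X,Z)$, and choosing $X,Y$ linearly independent (possible since $n\ge2$) forces $d\psi=0$, whence $\psi$ is constant on the connected manifold $M$.

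I expect the last step, a Schur-type rigidity, to be the main obstacle. In the Riemannian case constancy of the curvature function is immediate from $\nabla^gg=0$, but here $\nabla g=C\neq0$, so differentiating $R=\psi\,\Phi$ with $\Phi(X,Y)Z=g(Y,Z)X-g(X,Z)Y$ produces additional terms proportional to $\psi\big(C(U,Y,Z)X-C(U,X,Z)Y\big)$. The decisive observation is that these cubic-form contributions cancel identically under the relevant antisymmetrization precisely because $C$ is totally symmetric, so that the only surviving obstruction is $d\psi$, exactly as in the classical setting; this uniform cancellation is also what lets the Codazzi argument settle all dimensions $n\ge2$ at once rather than forcing a separate treatment of $n=2$.
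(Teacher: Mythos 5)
The paper does not actually prove this proposition: it is stated as a known result with a citation to the literature, so there is no in-paper argument to compare against. Judged on its own terms, your proof is correct and is essentially the standard route to this equivalence. The direction $(1)\Rightarrow(2)$ is a straightforward verification exactly as you describe, and in $(2)\Rightarrow(1)$ each step checks out: conjugate symmetry (via total symmetry of $\nabla^g K$ and equation $(\ref{Levicurvandstat})$) kills the two middle terms and yields symmetry of $\operatorname{Ric}$, since $\sum_i g([K_{e_i},K_X]Y,e_i)=g(K_XY,\operatorname{tr}_gK)-\operatorname{tr}(K_XK_Y)$ is symmetric in $X,Y$; the projective Weyl/Codazzi characterization then gives $R(X,Y)Z=\frac{1}{n-1}(\operatorname{Ric}(Y,Z)X-\operatorname{Ric}(X,Z)Y)$; the contraction of the skew-symmetry identity over $X,W$ correctly produces $n\operatorname{Ric}=(\operatorname{tr}_g\operatorname{Ric})\,g$; and your Schur step is clean because $(\nabla_X(\psi g))(Y,Z)-(\nabla_Y(\psi g))(X,Z)=(X\psi)g(Y,Z)-(Y\psi)g(X,Z)$, the $\psi C$ terms cancelling by total symmetry of $C$, which forces $d\psi=0$. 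Two small points worth making explicit if you write this up: for $n\geq 3$ the Codazzi condition on the normalized Ricci is a consequence of the vanishing of the projective Weyl tensor (via the second Bianchi identity), while for $n=2$ it is the entire content of projective flatness, so citing the precise form of the projective-flatness criterion you use in each dimension would tighten the argument; and the criterion presupposes symmetric Ricci, which is why your ordering---deriving Ricci symmetry from conjugate symmetry before invoking projective flatness---is not merely convenient but necessary.
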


Statistical manifolds were originally found in affine differential geometry. 
The details of the next example can be found in \cite{MR2648267} and \cite{NomizuSasaki}.

\vspace{0.5\baselineskip}

\begin{example}
    \label{affin}
    Let $f:M^n\to\mathbb{R}^{n+1}$ be an immersed hypersurface, and $\xi\in\Gamma(f^*T\mathbb{R}^{n+1})$ an transversal vector field of $f(M)$. 
    The pair of $\{f,\xi\}$ is called an \emph{affine immersion}.
    A torsion-free affine connection $\nabla$ and a symmetric $(0,2)$-tensor field $g$ are induced by the Gauss formula$:$
    \begin{equation}
        \label{Gaussfor}
        D_X f_*Y = f_*\nabla_X Y + g(X,Y)\xi,\quad X,Y\in\Gamma(TM)
    \end{equation}
    The immersion $f$ is called \textit{locally strongly convex}, if the tensor field $g$ is definite. 
    We will redefine $\xi$ if necessary so that $g$ is positive definite.
    Define the induced volume form $\omega_{\xi}$ on $M$ by
    \begin{equation}
        \label{volform}
        \omega_{\xi}(X_1,\dots,X_n) = \det(f_*X_1,\dots,f_*X_n,\xi),\quad X_1,\dots,X_n\in\Gamma(TM).
    \end{equation}
    The affine immersion $\{f,\xi\}$ is called an \textit{equiaffine immersion} if $\nabla \omega_{\xi}=0$ holds.
    This is equivalent to the total symmetry of the cubic form $C=\nabla g$.
    The induced structure $(g,\nabla)$ by a locally strongly convex immersion $f$ and an equiaffine transversal vector field $\xi$ is a statistical structure on $M$.
    
    If $\xi$ is a constant vector, then $(M,g,\nabla)$ has constant sectional curvature $0$. 
    If $\xi=kf$ holds for a non-zero real number $k$, then $(M,g,\nabla)$ is of constant sectional curvature $-k$.
    Particularly, if $\xi=\pm f$ holds, then the affine immersion is called a \textit{centroaffine immersion}.

    Let $(g,\nabla)$ be a statistical structure $M$ induced from an affine immersion $\{f,\xi\}$.
    The conormal map $\nu:M\to(\mathbb{R}^{n+1})^*$ of $\{f,\xi\}$ is defined by
    \begin{equation*}
        \nu_x(f_*T_xM)=0,\quad \nu_x(\xi_x)=1,\quad x\in M.
    \end{equation*}
    For each $x$, the vector $\nu_x\in(\mathbb{R}^{n+1})^*$ is transversal to $\nu_{*x}(T_xM)$.
    The Gauss formula of the centroaffine immersion $\nu$ is
    \begin{equation*}
        D_X \nu_*Y = \nu_*\overline{\nabla}_X Y-\frac{1}{n-1}\overline{\operatorname{Ric}}(X,Y)\nu, \quad X,Y\in\Gamma(TM).
    \end{equation*}
    Here, $\overline{\nabla}$ is the conjugate connection of $(g,\nabla)$.\\
\end{example}

\vspace{0.5\baselineskip}

The fundamental theorem of affine differential geometry characterizes when statistical structures can be realized from an affine connection \cite{NomizuSasaki}.

\vspace{0.5\baselineskip}

\begin{theorem}
    Let $(g,\nabla)$ be a statistical structure on a simply connected manifold $M$.
    There exists a locally strongly convex equiaffine immersion $\{f,\xi\}$ that induces $(g,\nabla)$ on $M$ by the Gauss formula, if and only if $\overline{\nabla}$ is projectively flat and $\operatorname{Ric}$ is symmetric.\\
\end{theorem}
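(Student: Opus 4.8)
The plan is to reduce the existence of $\{f,\xi\}$ to the solvability of the affine Gauss--Codazzi--Ricci system and then to integrate that system on the simply connected $M$. By the Gauss formula $(\ref{Gaussfor})$ together with the Weingarten formula $D_X\xi=-f_*SX+\tau(X)\xi$, an equiaffine immersion inducing $(g,\nabla)$ amounts to the data of a $(1,1)$-tensor field $S$ (the affine shape operator); the equiaffine condition $\nabla\omega_\xi=0$ is, by Example~\ref{affin}, the total symmetry of $C$ and makes $\tau$ vanish. Computing the curvature of the flat connection $D$ of $\mathbb{R}^{n+1}$ through the two structure formulas shows that, given the symmetry of $C$ and the torsion-freeness of $\nabla$, the integrability conditions collapse to
\begin{align*}
&R(X,Y)Z=g(Y,Z)SX-g(X,Z)SY, &&\text{(Gauss)}\\
&(\nabla_X S)Y=(\nabla_Y S)X, &&\text{(Codazzi)}\\
&g(SX,Y)=g(X,SY). &&\text{(Ricci)}
\end{align*}
I would encode these in a single connection $\widehat{\nabla}$ on the rank-$(n+1)$ bundle $TM\oplus\mathbb{R}$, writing $\epsilon$ for the section $(0,1)$, by $\widehat{\nabla}_X Y=\nabla_X Y+g(X,Y)\epsilon$ and $\widehat{\nabla}_X\epsilon=-SX$; its curvature vanishes exactly when the three displayed equations hold, the mixed components cancelling by torsion-freeness and the symmetry of $C$. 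As $M$ is simply connected, a flat $\widehat{\nabla}$ is trivial, giving a parallel isomorphism $TM\oplus\mathbb{R}\cong M\times\mathbb{R}^{n+1}$; composing the inclusion $TM\hookrightarrow TM\oplus\mathbb{R}$ with it yields an $\mathbb{R}^{n+1}$-valued $1$-form $\alpha$ that is closed (again by torsion-freeness of $\nabla$ and symmetry of $g$), and integrating $\alpha=df$ over the simply connected $M$ produces the immersion $f$, with $\xi$ the image of $\epsilon$; positive-definiteness of $g$ makes $f$ locally strongly convex.

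For the forward implication I would assume $\{f,\xi\}$ given and read off (Gauss), (Codazzi), (Ricci). Contracting (Gauss) gives $\operatorname{Ric}(Y,Z)=\operatorname{tr}(S)\,g(Y,Z)-g(SY,Z)$, symmetric because $S$ is $g$-self-adjoint by (Ricci); hence $\operatorname{Ric}$ is symmetric. Feeding (Gauss) into the conjugation identity $(\ref{intcurvature})$ rewrites it as $\overline{R}(X,Y)W=h(Y,W)X-h(X,W)Y$ with $h(X,Y):=g(SX,Y)$ symmetric. A connection whose curvature has this pure-trace form is projectively flat: for $n\ge 3$ its projective Weyl tensor vanishes identically, while for $n=2$ the extra Cotton-type condition needed for projective flatness corresponds precisely to (Codazzi). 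Thus $\overline{\nabla}$ is projectively flat.

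For the converse I would first note that on any statistical manifold $\operatorname{Ric}$ is symmetric if and only if $\overline{\operatorname{Ric}}$ is: by $(\ref{intcurvature})$ the traces satisfy $\sum_i g(R(X,Y)e_i,e_i)=-\sum_i g(\overline{R}(X,Y)e_i,e_i)$, and for a torsion-free connection the antisymmetric part of its Ricci tensor is a constant multiple of this trace, so the two antisymmetric parts coincide up to sign. Granting this, I define $S$ by $g(SX,Y)=\tfrac{1}{n-1}\overline{\operatorname{Ric}}(X,Y)$, which is $g$-self-adjoint and so satisfies (Ricci). Projective flatness of $\overline{\nabla}$ together with the symmetry of $\overline{\operatorname{Ric}}$ forces $\overline{R}(X,Y)W=\tfrac{1}{n-1}(\overline{\operatorname{Ric}}(Y,W)X-\overline{\operatorname{Ric}}(X,W)Y)$, which read back through $(\ref{intcurvature})$ is exactly (Gauss) for this $S$. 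Finally (Codazzi) follows, for $n\ge 3$, from the second Bianchi identity for $R$ now in Gauss form together with the symmetry of $C$; for $n=2$ it is delivered by the Cotton part of the projective flatness of $\overline{\nabla}$. With all three equations in force, the integration argument above yields $\{f,\xi\}$.

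I expect the dimension $n=2$ case to be the main obstacle. There the second Bianchi identity no longer forces (Codazzi) from (Gauss) --- the trace argument that works for $n\ge 3$ degenerates --- so one must invoke the genuine dimension-two definition of projective flatness (a Cotton/Liouville condition on the Schouten tensor) both to obtain (Codazzi) in the converse and to certify projective flatness of the pure-trace $\overline{\nabla}$ in the forward direction. The secondary nuisance is bookkeeping: an affine connection has two independent Ricci contractions, and one must keep the normalization of $S$ consistent against both $\operatorname{Ric}$ and $\overline{\operatorname{Ric}}$.
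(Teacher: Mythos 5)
The paper does not prove this statement: it is quoted as the fundamental theorem of affine differential geometry with a citation to Nomizu--Sasaki, so there is no in-paper argument to compare against. Your proposal reconstructs, correctly in substance, the standard proof from that source --- reduction to the affine Gauss--Codazzi--Ricci system, its encoding as a flat connection on $TM\oplus\mathbb{R}$ integrated over the simply connected base, and the dictionary between projective flatness of $\overline{\nabla}$ (with the $n=2$ Liouville/Cotton caveat handled as you indicate) and the Gauss and Codazzi equations for the shape operator $S$ determined by $g(SX,Y)=\tfrac{1}{n-1}\overline{\operatorname{Ric}}(X,Y)$.
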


\subsection{Contrast functions}
A \textit{contrast function} is a function that defines a statistical structure on a manifold.\\
\begin{definition}
    Let $M$ be a smooth manifold. A contrast function $\rho:M\times M\to\mathbb{R}$ of $M$ is a smooth function that satisfies the following conditions:
    \renewcommand{\labelenumi}{$\operatorname{(\theenumi)}$}
    \begin{enumerate}
        \item For any $p,q\in M$, $\rho(p,q)\geq0$. Particularly, $\rho(p,q)=0$ if and only if $p=q$.
        \item For any $p\in M$ and any non-zero $X\in T_pM$, $(X^LX^R\rho)(p,p)<0$ holds.
    \end{enumerate}
    Here, $X^L=(X,0),X^R=(0,X)\in T_{(p,p)}(M\times M)$.\\
\end{definition}

For $X_1,\dots,X_l,Y_1,\dots,Y_r\in\Gamma(TM)$ and $p\in M$, we use the notation
\begin{equation*}
    \rho(X_1,\dots,X_l|Y_1,\dots,Y_r)(p)=(X_1^L\cdot\cdot\cdot X_l^L Y_1^R\cdot\cdot\cdot Y_r^R\rho)(p,p).
\end{equation*}
For any fixed $p\in M$, the function $\rho(p,q),\, q\in M$ attains a global minimum at $p=q$.
Thus, we have $\rho(X|)=\rho(|X)=0$ for any $X\in\Gamma(TM)$. 
We also have
\begin{equation*}
    0=X\rho(Y|)=\rho(XY|)+\rho(X|Y)
\end{equation*}
For any $X,Y\in\Gamma(TM)$.

Let us induce a statistical structure $(g,\nabla)$ on $M$ by the contrast function.
First we define the Riemannian metric $g$ on $M$ by
\begin{equation*}
    g(X,Y) = -\rho(X|Y), \quad X,Y\in\Gamma(TM).
\end{equation*}
The affine connection $\nabla$ on $M$ is induced by
\begin{equation*}
    g(\nabla_X Y, Z) = -\rho(XY|Z), \quad X,Y,Z\in\Gamma(TM).
\end{equation*}
The statistical structure induced by a contrast function has the following properties.
See \cite{10.32917/hmj/1206128508} for a proof.
\begin{enumerate}
    \item $g(\overline{\nabla}_X Y,Z) = -\rho(Z|XY)$,
    \item $(\nabla_X g)(Y,Z) = \rho(XY|Z) - \rho(Z|XY)$,
\end{enumerate}
where $X,Y,Z\in\Gamma(TM)$.\\

It is known that any statistical manifold has a contrast function that induces the original statistical structure \cite{10.32917/hmj/1206128255}, but it is difficult to find the most suitable one.
The canonical contrast function of statistical structures induced by an affine immersion is the next example given in \cite{Kurose1994ONTD}.\\
\begin{example}
    \label{ccf1}
    Let $(g,\nabla)$ be a statistical structure on $M$ induced by a locally strongly convex equiaffine immersion $\{f,\xi\}$ as in Example \ref{affin}.
    Let $\nu:M\to\mathbb({R}^{n+1})^*$ be the conormal map of $\{f,\xi\}$. 
    A contrast function $\rho$ on $(M,g,\nabla)$ is defined by the following equation$:$
    \begin{equation}
        \rho(p,q)=\nu_q(f(p)-f(q)),\quad p,q\in M.
    \end{equation} 
    Note that the definition of $\rho$ is independent of the choice of the affine immersion that induces $(g,\nabla)$.
\end{example}

\section{Statistical manifolds with divisible cubic forms}
We derive the class of statistical manifolds with divisible cubic forms from affine differential geometry.
The simplest affine immersions are the one that their images lies on a hyperquadric.
The statistical structure induced by an affine immersion to some hyperquadric is characterized as below \cite{NomizuSasaki}.\\
\begin{proposition}
    Let $(g,\nabla)$ be a statistical structure on $M$ induced by a locally strongly convex equiaffine immersion $\{f,\xi\}$.
    The image of $f:M^n\to\mathbb{R}^{n+1}$ lies on a hyperquadric in $\mathbb{R}^{n+1}$ if and only if there exists $\sigma\in C^{\infty}(M)$ such that 
    \begin{align}
        C(X,Y,Z) &= \operatorname{sym}(d\sigma\otimes g)(X,Y,Z) \notag\\
                 &= d\sigma(X)g(Y,Z) + d\sigma(Y)g(Z,X) + d\sigma(Z)g(X,Y), \quad X,Y,Z\in\Gamma(TM).\label{divcub}
    \end{align}
\end{proposition}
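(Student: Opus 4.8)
The plan is to prove both implications by differentiating along $f$ and combining the Gauss formula $(\ref{Gaussfor})$ with the Weingarten formula of an equiaffine immersion, which, since $\nabla\omega_\xi=0$ forces the transversal connection form to vanish, reads $D_X\xi=-f_*SX$ for the affine shape operator $S$. For the forward implication, suppose $f(M)$ lies on the hyperquadric $\{\Phi=0\}$, where $\Phi(x)=h(x,x)+\ell(x)+\gamma$ with $h$ a constant symmetric bilinear form, $\ell$ a constant covector, and $\gamma\in\mathbb{R}$. First I would differentiate $\Phi\circ f\equiv0$. Since $\Phi$ is quadratic, $d\Phi_x=2h(x,\cdot)+\ell$, so one differentiation gives $d\Phi_{f(p)}(f_*X)=0$, and a second one, after inserting $(\ref{Gaussfor})$, yields
\[ h(f_*X,f_*Y)=-\tfrac12\,\psi\,g(X,Y),\qquad \psi:=d\Phi(\xi). \]
Differentiating a third time and substituting the Weingarten formula together with $d\Phi(f_*SY)=0$ identifies the mixed block as $h(\xi,f_*Y)=\tfrac12\,d\psi(Y)$. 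Collecting the three resulting contributions and recalling $C=\nabla g$, the connection terms cancel and one is left with the pointwise identity $\psi\,C=-\operatorname{sym}(d\psi\otimes g)$. Once $\psi$ is known to be nowhere zero, it has constant sign on the connected $M$, and setting $\sigma:=-\log|\psi|$ gives $d\sigma=-d\psi/\psi$, hence $C=\operatorname{sym}(d\sigma\otimes g)$.

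For the converse, assume $C=\operatorname{sym}(d\sigma\otimes g)$ and put $\psi:=e^{-\sigma}>0$, so that the same identity $\psi\,C=-\operatorname{sym}(d\psi\otimes g)$ holds. The idea is to reconstruct the quadric from its conormal. Using $\nu(f_*X)=0$ and $\nu(\xi)=1$, a direct computation gives $(\nu_*X)(f_*Y)=-g(X,Y)$ and $(\nu_*X)(\xi)=0$, so the $(\mathbb{R}^{n+1})^*$-valued map $\mu:=\psi\,\nu$ satisfies: $D_X\mu$ sends $f_*Y\mapsto-\psi\,g(X,Y)$ and $\xi\mapsto d\psi(X)$. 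I would then define a symmetric bilinear form $h$ on $\mathbb{R}^{n+1}$ pointwise along $f$ by $2h(f_*X,f_*Y)=-\psi\,g(X,Y)$, $2h(f_*X,\xi)=d\psi(X)$, and $2h(\xi,\xi)=\kappa$ for a function $\kappa$ to be determined, and show that $\kappa$ can be chosen so that $h$ is $D$-parallel, i.e.\ a genuine constant form on $\mathbb{R}^{n+1}$. The tangential parallelism $(D_Xh)(f_*Y,f_*Z)=0$ is exactly the identity $\psi\,C=-\operatorname{sym}(d\psi\otimes g)$, while the remaining components determine $\kappa$ and $d\kappa$ through the Gauss and Weingarten formulas. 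With $h$ constant, $\ell:=\mu-2h(f,\cdot)$ is constant as well, and $\Phi(x):=h(x,x)+\ell(x)$ is a quadratic polynomial with $d\Phi_{f(p)}=\mu_p=\psi\,\nu$. Finally $X(\Phi\circ f)=d\Phi_{f(p)}(f_*X)=\psi\,\nu(f_*X)=0$, so $\Phi\circ f$ is constant and $f(M)$ lies on the hyperquadric $\{\Phi=\mathrm{const}\}$.

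The step I expect to be the main obstacle is the converse: extracting from the first-order datum $\mu=\psi\,\nu$ a genuinely constant ($D$-parallel) symmetric bilinear form on $\mathbb{R}^{n+1}$, rather than merely a pointwise family of forms. This is precisely where the divisibility hypothesis is indispensable, since it is exactly the integrability condition that renders the tangential block of $h$ parallel; verifying the mixed and normal blocks then amounts to a bookkeeping computation fixing $\kappa$. A secondary technical point, needed in the forward direction, is the non-vanishing of $\psi=d\Phi(\xi)$: because $f$ is an immersion with $f_*T_pM$ of dimension $n$ and $f(M)$ lies in the quadric, at every regular point $f_*T_pM$ coincides with the tangent hyperplane $\ker d\Phi_{f(p)}$, so the transversal field $\xi$ cannot lie in $\ker d\Phi_{f(p)}$ and $\psi\neq0$. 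Local strong convexity (definiteness of $g$) rules out the degenerate quadrics, whose induced metric would be degenerate along rulings, so the relevant quadric is nondegenerate and smooth, and the argument applies throughout $M$.
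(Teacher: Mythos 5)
The paper offers no proof of this proposition; it is quoted from Nomizu--Sasaki, where the standard route is to reduce to the Blaschke normalization via the transformation law of the cubic form under an equiaffine change of transversal field and then invoke the Maschke--Pick--Berwald theorem ($C=0$ implies the image is a quadric). Your forward direction is correct and essentially complete: differentiating $\Phi\circ f\equiv 0$ three times with the Gauss and Weingarten formulas does give $h(f_*X,f_*Y)=-\tfrac12\psi\,g(X,Y)$, $h(\xi,f_*Y)=\tfrac12\,d\psi(Y)$ and hence $\psi C=-\operatorname{sym}(d\psi\otimes g)$, and your argument that $\psi=d\Phi(\xi)$ is nowhere zero (an $n$-dimensional totally isotropic tangent space would force the restriction of $h$, hence the conformal class of $g$, to be degenerate, contradicting local strong convexity) is sound, if terse.

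The converse, however, contains a genuine gap, and you have mislocated the difficulty. The tangential block $(D_Xh)(f_*Y,f_*Z)=0$ is indeed exactly the divisibility identity $\psi C=-\operatorname{sym}(d\psi\otimes g)$, as you say. But the mixed block is \emph{not} a bookkeeping step that ``fixes $\kappa$'': writing out $(D_Xh)(f_*Y,\xi)$ with $D_X\xi=-f_*SX$ gives $\tfrac12\bigl((\nabla_Xd\psi)(Y)-\psi\,g(SX,Y)-\kappa\,g(X,Y)\bigr)$, so you need the full symmetric $2$-tensor identity $\nabla d\psi-\psi\,\widehat{h}=\kappa\,g$, where $\widehat{h}(X,Y)=g(SX,Y)$ and $\nabla$ is the induced connection. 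Equivalently, unwinding $\psi=e^{-\sigma}$, you must prove $\widehat{h}+\nabla^g d\sigma=c\,g$ for some function $c$. No choice of the single scalar $\kappa$ can produce this when $n\geq 2$; it is a nontrivial consequence of the hypotheses that has to be established, and it is precisely the Pick--Berwald step (it says that after renormalizing the transversal field to kill the cubic form, the new shape operator is a multiple of the identity). It can be proved, but only by bringing in second-order information you never use: combine the Gauss equation $R(X,Y)Z=g(Y,Z)SX-g(X,Z)SY$ with the duality $g(R(X,Y)Z,W)=-g(Z,\overline{R}(X,Y)W)$ and the explicit formula for $\overline{R}$ in terms of $R^g$ and $\sigma$, then exploit the skew-symmetry of $R^g$ in its last two slots and contract (a Schur-type argument) to conclude $\widehat{h}+\nabla^g d\sigma\propto g$. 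Without this curvature-level input the field of bilinear forms $h_p$ you build is not $D$-parallel and the constant quadratic polynomial $\Phi$ is never obtained; the normal block then still requires a separate (easy) verification that the resulting $\kappa$ makes $(D_Xh)(\xi,\xi)$ vanish. As written, the proof of the ``if'' direction is therefore incomplete.
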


\begin{definition}
    Let $(M,g,\nabla)$ be a statistical manifold. The statistical manifold is said have \textit{divivisible cubic form} if the cubic form satisfies the equation $(\ref{divcub})$ for some function on $M$.\\
\end{definition}

On a statistical manifold $(M,g,\nabla)$ with $C=\operatorname{sym}(d\sigma\otimes g)$, the difference tensor $K$ is given by the following equation$:$
\begin{equation}
    \label{difften}
    K_X Y = -\frac{1}{2}(d\sigma(X)Y + d\sigma(Y)X + g(X,Y)\operatorname{grad}_g\sigma).
\end{equation}
Here, $X,Y\in\Gamma(TM)$ and $\operatorname{grad}_g\sigma$ is the gradient vector field of $\sigma$ with respect to $g$.
The divisibility of the cubic form can be rewritten by the difference tensor satisfying equation $(\ref{difften})$ for some smooth function.\\

\begin{example}
    \label{ellipex}
    Let $f:\mathbb{R}^2\to\mathbb{R}^3$ be an elliptic paraboloid, that is$:$
    \begin{equation*}
        f(x^1,x^2) = \left(x^1,x^2,\frac{1}{2}((x^1)^2+(x^2)^2-1)\right),\quad (x^1,x^2)\in\mathbb{R}^2,
    \end{equation*}
    for some coordinate system $\{y^1,y^2,y^3\}$ on $\mathbb{R}^3$.
    The immersion $f$ is a centroaffine immersion.
    The statistical structure induced by the Gauss formula $(\ref{Gaussfor})$ with the transversal vector field $-f$ is
    \begin{align}
        g &= \frac{2}{(x^1)^2+(x^2)^2+1}\left((dx^1)^2 + (dx^2)^2\right), \notag\\
        \nabla_{\frac{\partial}{\partial x^i}}\frac{\partial}{\partial x^j} =& \frac{2\delta_{ij}}{(x^1)^2+(x^2)^2+1}\left(x^1\frac{\partial}{\partial x^1}+x^2\frac{\partial}{\partial x^2}\right). \notag
    \end{align}
    Here, if we define $\sigma\in C^{\infty}(\mathbb{R}^2)$ by $\sigma(x^1,x^2)=-\log(\frac{1}{2}((x^1)^2 + (x^2)^2+1)),\,(x^1,x^2)\in\mathbb{R}^2$, then we have $\nabla g = \operatorname{sym}(d\sigma\otimes g)$.\\
    
    The conjugate connection $\overline{\nabla}$ of $(g,\nabla)$ is induced by the conormal map of $\{f,-f\}$$:$
    \begin{equation*}
        \nu_{(x^1,x^2)} = \frac{1}{(x^1)^2+(x^2)^2+1}\left(-2x^1dy^1-2x^2dy^2+dy^3\right)\in\mathbb{R}^3,\quad (x^1,x^2)\in\mathbb{R}^2
    \end{equation*}
    and it is$:$
    \begin{equation*}
        \overline{\nabla}_{\frac{\partial}{\partial x^i}}\frac{\partial}{\partial x^j} = -\frac{2}{(x^1)^2+(x^2)^2+1}\left(x^i\frac{\partial}{\partial x^j}+x^j\frac{\partial}{\partial x^i}\right). \notag
    \end{equation*}
    The statistical structure $(g,\nabla)$ is induced from a centroaffine immersion, and the statistical manifold $(\mathbb{R}^2,g,\nabla)$ is of constant sectional curvature $1$.
    Thus, we have $\operatorname{Ric}=\overline{\operatorname{Ric}}=g$.\\
\end{example}

The class of statistical manifolds with divisible cubics forms can also be characterized using the following equivalence of statistical structures, introduced in \cite{Matsuzoe1998OnRO}.\\

\begin{definition}
    Let $(g,\nabla)$ and $(\tilde{g},\tilde{\nabla})$ be two statistical structures on a manifold $M$.
    The two statistical structures are \textit{conformally-projectively equivalent} if there exists $\varphi,\psi\in C^{\infty}(M)$ such that
    \begin{align}
        \tilde{g} &= e^{\varphi+\psi}g \label{cp1}, \\
        \tilde{\nabla}_X Y &= \nabla_X Y + d\varphi(X)Y + d\varphi(Y)X - g(X,Y)\operatorname{grad}_g\psi, \label{cp2}
    \end{align}
    where $X,Y\in\Gamma(TM)$.\\
\end{definition}

\begin{proposition}
    A statistical manifold $(M,g,\nabla)$ has a divisible cubic form if and only if there exists a Riemannian metric $\tilde{g}=e^{\sigma}g$ on $M$ such that the statistical structures $(g,\nabla)$ and $(\tilde{g},\nabla^{\tilde{g}})$ are conformally-projectively equivalent.\\
\end{proposition}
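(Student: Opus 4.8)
The plan is to reduce both implications to two standard ingredients: the conformal change law for the Levi-Civita connection, and the explicit formula $(\ref{difften})$ for the difference tensor of a divisible cubic form. Recall that for any $u\in C^{\infty}(M)$ and the conformal metric $\tilde g = e^{u}g$ one has
\begin{equation*}
    \nabla^{\tilde g}_X Y = \nabla^g_X Y + \tfrac{1}{2}\big(du(X)Y + du(Y)X - g(X,Y)\operatorname{grad}_g u\big),\quad X,Y\in\Gamma(TM),
\end{equation*}
which follows directly from the Koszul formula. Since $(\tilde g,\nabla^{\tilde g})$ is automatically a Riemannian statistical structure (its cubic form vanishes), the content of the proposition is entirely about how the difference tensor $K=\nabla-\nabla^g$ transforms, and I would phrase everything through $K$ using the correspondence $(\ref{relcubdiff})$.

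For the backward implication, suppose $(g,\nabla)$ and $(\tilde g,\nabla^{\tilde g})$ are conformally-projectively equivalent via $\varphi,\psi$, so that $\tilde g=e^{\varphi+\psi}g$ and $(\ref{cp2})$ holds with $\tilde\nabla=\nabla^{\tilde g}$. First I would solve $(\ref{cp2})$ for $\nabla$, then substitute the conformal change law with $u=\varphi+\psi$, and compute $K=\nabla-\nabla^g$. The $d\varphi$ terms cancel part of $\tfrac{1}{2}d(\varphi+\psi)$, while the explicit $g(X,Y)\operatorname{grad}_g\psi$ from $(\ref{cp2})$ combines with the conformal trace term $-\tfrac{1}{2}g(X,Y)\operatorname{grad}_g(\varphi+\psi)$. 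The key point to highlight is that after collecting, the one-form coefficient and the gradient vector field both turn out to be governed by the single function $\varphi-\psi$, giving
\begin{equation*}
    K_XY = -\tfrac{1}{2}\big(d\tau(X)Y + d\tau(Y)X + g(X,Y)\operatorname{grad}_g\tau\big),\qquad \tau=\varphi-\psi.
\end{equation*}
Comparing with $(\ref{difften})$ shows the cubic form is divisible with function $\tau$.

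For the forward implication, suppose $C=\operatorname{sym}(d\sigma\otimes g)$, so $K$ is given by $(\ref{difften})$. I would simply take $\varphi=\sigma$, $\psi=0$, and $\tilde g=e^{\sigma}g$, and verify $(\ref{cp1})$ and $(\ref{cp2})$ directly: writing $\nabla=\nabla^g+K$ and using the conformal change law with $u=\sigma$, the tensor $K$ and half of the conformal correction combine so that the gradient terms cancel and the one-form terms double, yielding exactly $\nabla^{\tilde g}_XY=\nabla_XY+d\sigma(X)Y+d\sigma(Y)X$, which is $(\ref{cp2})$ with $\varphi=\sigma$, $\psi=0$. Since both $(g,\nabla)$ and $(\tilde g,\nabla^{\tilde g})$ are statistical structures, this establishes conformal-projective equivalence.

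The computations are routine; the only real obstacle is the bookkeeping of the several one-form and gradient contributions and getting the conformal normalization right (the factor $\tfrac{1}{2}$, since $\tilde g=e^{u}g$ rather than $e^{2u}g$). The conceptual heart, worth stating explicitly, is that conformal-projective equivalence a priori allows two independent functions $\varphi,\psi$, yet equivalence to the \emph{Levi-Civita} connection of the conformal metric forces the trace part and the one-form part to descend from the single function $\varphi-\psi$, which is precisely the divisibility condition.
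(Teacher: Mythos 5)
Your proposal is correct and follows essentially the same route as the paper's proof: both directions rest on the conformal change law for the Levi-Civita connection combined with the difference-tensor formula $(\ref{difften})$, with the forward implication realized by $\varphi=\sigma$, $\psi=0$ and the backward implication by collecting terms to find $K$ governed by $\varphi-\psi$. No meaningful difference from the paper's argument.
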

\begin{proof}
    First, let $(M,g,\nabla)$ be a statistical manifold with divisible cubic form of $C=\operatorname{sym}(d\sigma\otimes g)$.
    The Levi-Civita connections of $g$ and $\tilde{g}=e^{\sigma}g$ are related by 
    \begin{equation}
        \label{contrans}
        \nabla^{\tilde{g}}_X Y = \nabla^g_X Y + \frac{1}{2}\left(d\sigma(X)Y+d\sigma(Y)X-g(X,Y)\operatorname{grad}_g\sigma\right)
    \end{equation}
    for $X,Y\in\Gamma(TM)$. Therefore, combining with equation $(\ref{difften})$, the affine connections $\nabla$ and $\nabla^{\tilde{g}}$ are related by
    \begin{equation}
        \label{proj}
        \nabla^{\tilde{g}}_X Y = \nabla_X Y + d\sigma(X)Y+d\sigma(Y)X
    \end{equation}
    for $X,Y\in\Gamma(TM)$, which is the equation $(\ref{cp2})$ for $\varphi = \sigma$ and $\psi=0$.

    Conversely, suppose that on a statistical manifold $(M,g,\nabla)$, there exists a Riemannian metric $\tilde{g}$ on $M$ such that equations $(\ref{cp1})$ and $(\ref{cp2})$ hold for some $\varphi, \psi\in C^{\infty}(M)$.
    Then, difference tensor $K$ of $(M,g,\nabla)$ is given by
    \begin{equation*}
        K_X Y = -\frac{1}{2}\left(d(\varphi-\psi)(X)Y + d(\varphi-\psi)(Y)X + g(X,Y)\operatorname{grad}_g(\varphi-\psi)\right)
    \end{equation*}
    for $X,Y\in\Gamma(TM)$.
    Therefore, the cubic form of $(M,g,\nabla)$ is $C=\operatorname{sym}(d(\varphi-\psi)\otimes g)$.\\
\end{proof}

\subsection{Properties of statistical manifolds with divisible cubic forms}
Let $(M,g,\nabla)$ be a statistical manifold with $C=\operatorname{sym}(d\sigma\otimes g)$.
The covariant derivatives with respect to $\nabla^g$ of $C$ and $K$ are given by
\begin{align}
    (\nabla^g_X K)(Y,Z) &= -\frac{1}{2}\left((\nabla^g_X d\sigma)(Y)Z + (\nabla^g_X d\sigma)(Z)Y + g(Y,Z)\nabla^g_X \operatorname{grad}_g\sigma\right), \notag\\
    (\nabla^g_X C)(Y,Z,W) &= \operatorname{sym}((\nabla^g_X d\sigma)\otimes g)(Y,Z,W),\notag
\end{align}
where $X,Y,Z,W\in\Gamma(TM)$.\\

The curvature tensor fields and Ricci curvature tensor fields are given as follows in \cite{NOGUCHI1992197}.\\
\begin{proposition}
Let $(M^n,g,\nabla)$ be a statistical manifold with divisible cubic form of $C=\operatorname{sym}(d\sigma\otimes g)$.
We have that
\begin{align}
    R(X,Y)Z &= \frac{1}{2}\left((\nabla^g_X d\sigma)(Z)Y-(\nabla^g_Y d\sigma)(Z)X+g(Y,Z)\nabla^g_X\operatorname{grad}_g\sigma-g(X,Z)\nabla^g_Y\operatorname{grad}_g\sigma\right)\notag\\
    &+ R^g(X,Y)Z + \frac{1}{4}\left(d\sigma(Y)d\sigma(Z)X-d\sigma(X)d\sigma(Z)Y\right) \notag\\
    &+ \frac{1}{4}\left(g(Y,Z)(\|d\sigma\|_g^2X+d\sigma(X)\operatorname{grad}_g\sigma)-g(X,Z)(\|d\sigma\|_g^2Y+d\sigma(Y)\operatorname{grad}_g\sigma)\right),\\
    \operatorname{Ric}(X,Y) &= \operatorname{Ric}^g(X,Y) + \frac{1}{2}\left((n+2)(\nabla^g_X d\sigma)(Y) - g(X,Y)\Delta_g\sigma\right)\notag\\
    &+ \frac{1}{4}\left((n-2)d\sigma(X)d\sigma(Y) + n\|d\sigma\|_g^2g(Y,Z)\right),
\end{align}
where $X,Y,Z\in\Gamma(TM)$, $\|\cdot\|_g$ denotes the norm defined by $g$, and $\Delta_g$ is the Laplace operator on $(M,g)$ of negative eigenvalues.\\
\end{proposition}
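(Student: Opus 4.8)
The plan is to read the curvature of $\nabla$ off the general identity $(\ref{Levicurvandstat})$ relating $R$, $R^g$, the covariant derivative $\nabla^g K$, and the commutator $[K_X,K_Y]$. Rearranging that identity expresses $R(X,Y)Z$ as $R^g(X,Y)Z$ together with the antisymmetrized derivative $(\nabla^g_X K)(Y,Z)-(\nabla^g_Y K)(X,Z)$ and the commutator $[K_X,K_Y]Z$, so the computation cleanly splits into these two independent blocks. Into the first block I would substitute the displayed formula for $\nabla^g K$, and into the second the explicit difference tensor $(\ref{difften})$; collecting and regrouping the monomials in $d\sigma$, $g$, $\nabla^g d\sigma$ and $\operatorname{grad}_g\sigma$, then isolating the $R^g$ term, should reproduce the three lines of the asserted curvature formula.

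For the derivative block the one structural observation that does the work is that $\nabla^g d\sigma=\operatorname{Hess}_g\sigma$ is symmetric: the two terms $(\nabla^g_X d\sigma)(Y)Z$ and $(\nabla^g_Y d\sigma)(X)Z$ coming from the two copies of $\nabla^g K$ are then equal and cancel under antisymmetrization, leaving exactly the Hessian and $\nabla^g\operatorname{grad}_g\sigma$ terms of the first line. The commutator is the genuinely laborious step and the main obstacle. Expanding $K_X(K_Y Z)$ by linearity of $K$ in its second slot produces the three vectors $K_X Z$, $K_X Y$ and $K_X\operatorname{grad}_g\sigma$, and the gradient term must be reduced using the two duality identities $g(X,\operatorname{grad}_g\sigma)=d\sigma(X)$ and $d\sigma(\operatorname{grad}_g\sigma)=\|d\sigma\|_g^2$ in order to rewrite $K_X\operatorname{grad}_g\sigma$ as a combination of $\operatorname{grad}_g\sigma$ and $\|d\sigma\|_g^2\,X$. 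Subtracting the $X\leftrightarrow Y$ counterpart, the $d\sigma(\cdot)d\sigma(\cdot)$-type coefficients, the $\|d\sigma\|_g^2$ coefficients and the $\operatorname{grad}_g\sigma$-directed parts collapse into precisely the last two lines; this involves cancelling and combining roughly a dozen monomials and is where sign and coefficient errors are most likely.

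The Ricci identity then follows by contracting the curvature formula with an orthonormal frame $\{e_i\}$ via $\operatorname{Ric}(X,Y)=\sum_i g(R(e_i,X)Y,e_i)$, tracing each of the three blocks separately. The frame identities I expect to use are $\sum_i g(Z,e_i)e_i=Z$, $\sum_i g(e_i,e_i)=n$, $\sum_i d\sigma(e_i)g(Z,e_i)=d\sigma(Z)$, $\sum_i d\sigma(e_i)^2=\|d\sigma\|_g^2$, and $\sum_i(\nabla^g_{e_i}d\sigma)(e_i)=\Delta_g\sigma$. The $R^g$ block traces to $\operatorname{Ric}^g$; the derivative block yields the Hessian term together with the $g(X,Y)\Delta_g\sigma$ term; and the commutator block yields the $d\sigma\otimes d\sigma$ and $\|d\sigma\|_g^2\,g$ terms. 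The dimension weights are exactly the residues of these traces: a factor $n$ enters whenever the contracted index lands directly on the metric, while the $\pm2$ shifts producing the coefficients $n+2$ and $n-2$ record the two alternative pairings in which $e_i$ is instead contracted against $X$ or against $\operatorname{grad}_g\sigma$. Tracking which of these pairings survives for each monomial is the only delicate point, and once the coefficients are assembled the stated Ricci formula results.
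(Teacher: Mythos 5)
Your strategy is the right one, and in fact the paper gives no proof of this proposition at all (it only cites Noguchi), so the computation you outline---substitute the explicit $K$ and $\nabla^g K$ into the Gauss-type identity relating $R$ and $R^g$, use the symmetry of the Hessian to cancel the $Z$-directed terms, expand the commutator via $K_X\operatorname{grad}_g\sigma=-\tfrac12\left(2d\sigma(X)\operatorname{grad}_g\sigma+\|d\sigma\|_g^2X\right)$, then contract against an orthonormal frame---is exactly the computation that has to be done, and your list of frame identities for the Ricci trace is complete.

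The genuine problem is sign bookkeeping at the one point where your outline is noncommittal. Equation $(\ref{Levicurvandstat})$ as printed has $R$ and $R^g$ transposed relative to the standard identity: for $\nabla=\nabla^g+K$ with $\nabla^g$ torsion-free one has
\begin{equation*}
R(X,Y)Z=R^g(X,Y)Z+(\nabla^g_XK)(Y,Z)-(\nabla^g_YK)(X,Z)+[K_X,K_Y]Z,
\end{equation*}
so ``rearranging $(\ref{Levicurvandstat})$'' flips the sign of \emph{both} blocks, whereas the correct identity flips neither; no choice reproduces the printed statement. Carrying your two blocks through with the correct identity, the commutator does yield the two $\tfrac14$-lines exactly as displayed, but the derivative block contributes $-\tfrac12\bigl((\nabla^g_Xd\sigma)(Z)Y-(\nabla^g_Yd\sigma)(Z)X+g(Y,Z)\nabla^g_X\operatorname{grad}_g\sigma-g(X,Z)\nabla^g_Y\operatorname{grad}_g\sigma\bigr)$, the \emph{negative} of the first displayed line, and the trace then gives coefficient $n$ rather than $n+2$ in front of $(\nabla^g_Xd\sigma)(Y)$. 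You can check which sign is right on $\mathbb{R}^2$ with the Euclidean metric and $\sigma=\tfrac12(x^1)^2$, where a direct computation gives $R(\partial_1,\partial_2)\partial_2=\bigl(-\tfrac12+\tfrac12(x^1)^2\bigr)\partial_1$, or on Example \ref{ellipex}, where only the corrected Ricci formula returns $\operatorname{Ric}=g$; the corrected version is also the one consistent with the $R$--$\tilde R$ relation stated in Subsection 3.2. So your plan, faithfully executed, will not land on the printed formula: you must first correct $(\ref{Levicurvandstat})$ and then record the resulting $-\tfrac12$ and $n$. (The $g(Y,Z)$ in the last line of the Ricci formula is a typo for $g(X,Y)$; your contraction handles that part correctly.)
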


The conjugate symmetry of a statistical manifold with divisible cubic form is given by the following proposition, see \cite{MIN201539} and \cite{NOGUCHI1992197} for proof.\\

\begin{proposition}
    \label{conj}
    Let $(M,g,\nabla)$ be a statistical manifold of divisible cubic form, where $C=\operatorname{sym}(d\sigma\otimes g)$.
    The following conditions are equivalent.
    \renewcommand{\labelenumi}{$\operatorname{(\theenumi)}$}
    \begin{enumerate}
        \item The statistical manifold $(M,g,\nabla)$ is conjugate symmetric.
        \item There exists $f\in C^{\infty}(M)$ such that $\nabla^gd\sigma = f\cdot g$ holds. \label{a}
        \item $\operatorname{Ric} = \overline{\operatorname{Ric}}$.\\
    \end{enumerate}
\end{proposition}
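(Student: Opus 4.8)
My plan is to establish the cycle of implications $(1)\Rightarrow(3)\Rightarrow(2)\Rightarrow(1)$, using throughout the abbreviation $H:=\nabla^gd\sigma$ for the Hessian of $\sigma$, so that $H(X,Y)=(\nabla^g_Xd\sigma)(Y)$ and $\operatorname{tr}_gH=\Delta_g\sigma$. The key observation is that condition (2) says exactly that $H$ is pure trace, i.e. that the trace-free Hessian $H-\tfrac1n(\Delta_g\sigma)g$ vanishes, and I will show that both conjugate symmetry and the equality $\operatorname{Ric}=\overline{\operatorname{Ric}}$ are governed by this one trace-free tensor. The unifying tool is the Remark following \eqref{addcurv}, which identifies conjugate symmetry with the total symmetry of $\nabla^gK$.

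The two short implications come first. For $(1)\Rightarrow(3)$, conjugate symmetry means $R=\overline R$; contracting this equality over an orthonormal frame gives $\operatorname{Ric}=\overline{\operatorname{Ric}}$ immediately. For $(2)\Rightarrow(1)$, I would substitute $H=f\cdot g$ into the formula for $\nabla^gK$ recorded at the start of \S3.1. Since $H=f\cdot g$ forces $\nabla^g_X\operatorname{grad}_g\sigma=fX$, the formula collapses to
\[
g\big((\nabla^g_XK)(Y,Z),W\big)=-\tfrac{f}{2}\big(g(X,Y)g(Z,W)+g(X,Z)g(Y,W)+g(X,W)g(Y,Z)\big),
\]
which is symmetric under every permutation of $X,Y,Z,W$. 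Hence $\nabla^gK$ is totally symmetric, and the Remark yields conjugate symmetry.

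The substance of the argument is $(3)\Rightarrow(2)$, where I must compute $\operatorname{Ric}-\overline{\operatorname{Ric}}$ and show it detects the trace-free Hessian. Applying \eqref{Levicurvandstat} to both $\nabla$ and to $\overline\nabla=\nabla^g-K$ (whose difference tensor is $\overline K=-K$) and subtracting, the quadratic terms $[K_X,K_Y]$ cancel and one is left with
\[
R(X,Y)Z-\overline R(X,Y)Z=\pm 2\big((\nabla^g_XK)(Y,Z)-(\nabla^g_YK)(X,Z)\big).
\]
Contracting this against an orthonormal frame and inserting the explicit expression for $\nabla^gK$, every term reorganizes into $H$ and $\operatorname{tr}_gH=\Delta_g\sigma$, and the outcome is a nonzero constant multiple of the trace-free Hessian,
\[
\operatorname{Ric}(X,Y)-\overline{\operatorname{Ric}}(X,Y)=c\Big(H(X,Y)-\tfrac1n(\Delta_g\sigma)\,g(X,Y)\Big),\qquad c\neq 0.
\]
Therefore $\operatorname{Ric}=\overline{\operatorname{Ric}}$ forces $H=\tfrac1n(\Delta_g\sigma)\,g$, that is $\nabla^gd\sigma=f\cdot g$ with $f=\tfrac1n\Delta_g\sigma$, which is precisely condition (2), closing the cycle. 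Conceptually the same relation can be recovered by noting that $\overline C=-C$ makes $(g,\overline\nabla)$ a divisible statistical structure with defining function $-\sigma$, so that $\overline{\operatorname{Ric}}$ is obtained from $\operatorname{Ric}$ by the substitution $\sigma\mapsto-\sigma$ and the subtraction isolates exactly the terms odd in $\sigma$, namely the Hessian and Laplacian contributions.

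The main obstacle is the contraction computation in $(3)\Rightarrow(2)$: I must track the orthonormal-frame traces carefully so that the surviving Hessian and Laplacian contributions combine into a clean multiple of $H-\tfrac1n(\Delta_g\sigma)g$, and in particular verify that the scalar $c$ is nonzero. This nonvanishing is what upgrades $\operatorname{Ric}=\overline{\operatorname{Ric}}\Rightarrow H=f\cdot g$ from a one-sided consequence into the genuine equivalence required to close the loop with $(2)\Rightarrow(1)$.
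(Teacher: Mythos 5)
Your argument is correct, and the cycle $(1)\Rightarrow(3)\Rightarrow(2)\Rightarrow(1)$ closes as you describe. Note that the paper itself does not prove this proposition; it cites \cite{MIN201539} and \cite{NOGUCHI1992197}, so there is no in-text proof to compare against, but your route (total symmetry of $\nabla^gK$ for $(2)\Rightarrow(1)$, and the trace-free Hessian for $(3)\Rightarrow(2)$) is the natural self-contained one. To settle your one remaining worry: carrying out the contraction from $(\nabla^g_XK)(Y,Z)=-\tfrac12\bigl(H(X,Y)Z+H(X,Z)Y+g(Y,Z)\nabla^g_X\operatorname{grad}_g\sigma\bigr)$ gives $\sum_i g((\nabla^g_{e_i}K)(X,Y),e_i)=-\tfrac12\bigl(2H(X,Y)+(\Delta_g\sigma)g(X,Y)\bigr)$ and $\sum_i g((\nabla^g_XK)(e_i,Y),e_i)=-\tfrac{n+2}{2}H(X,Y)$, whence $\operatorname{Ric}-\overline{\operatorname{Ric}}=-n\bigl(H-\tfrac1n(\Delta_g\sigma)g\bigr)$; so $c=-n\neq0$ and the equivalence is genuine. (Your alternative derivation via the substitution $\sigma\mapsto-\sigma$ is also sound in principle, but be careful not to read $c$ off from the Ricci formula printed in Proposition 3.6 of the paper: that formula's linear term $\tfrac12(n+2)(\nabla^g_Xd\sigma)(Y)$ is inconsistent with the curvature formula above it and with Example 3.9, where $\nabla^gd\sigma=\tfrac12\Delta_g\sigma\cdot g$ and yet $\operatorname{Ric}=\overline{\operatorname{Ric}}=g$; the direct contraction you planned is the reliable route.)
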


If condition $(\ref{a})$ in Proposition \ref{conj} holds on a statistical manifold $(M,g,\nabla)$ with $C=\operatorname{sym}(d\sigma\otimes g)$, the vector field $\operatorname{grad}_g\sigma$ is a concircular vector field on the Riemannian manifold $(M,g)$ \cite{math8040469}.
Particularly, we have $f=\frac{1}{n}\Delta_g\sigma$.
Indeed, if we let $\{e_1,\dots,e_n\}$ be an orthonormal frame of $(M,g)$, we have
\begin{equation*}
        n\cdot f= \sum_{i=1}^nf\cdot g(e_i,e_i) = \sum_{i=1}^n(\nabla^g_{e_i}d\sigma)(e_i) = \Delta_g\sigma.\\
\end{equation*}

\begin{example}
    Consider the statistical structure $(g,\nabla)$ on $\mathbb{R}^2$ induced from a centroaffine immersion in Example \ref{ellipex}.
    The statistical manifold $(M,g,\nabla)$ is of constant sectional curvature, hence it is conjugate symmetric.
    Since $g^*=e^{-\sigma}g$ is the Euclidean metric on $\mathbb{R}^2$, let us calculate $\nabla^gd\sigma$ by using the equation \ref{contrans}.
    We have
    \begin{equation*}
            (\nabla^g_Xd\sigma)(Y) = (\nabla^{g^*}_Xd\sigma)(Y) - d\sigma(X)d\sigma(Y) + \frac{1}{2}g^*(X,Y)\|d\sigma\|^2_{g^*}
    \end{equation*}
    for $X,Y\in\Gamma(TM)$.
    For $(X_1,X_2)=(\frac{\partial}{\partial x^1},\frac{\partial}{\partial x^2})$ and $(x^1,x^2)\in\mathbb{R}^2$, we have
    \begin{equation*}
        d\sigma(X_i)(x^1,x^2) = -\frac{2x^i}{(x^1)^2+(x^2)^2+1},
    \end{equation*}
    \begin{equation*}
        (\nabla^{g^*}_{X_i}d\sigma)(X_j)(x^1,x^2) = -\frac{2\delta_{ij}((x^1)^2+(x^2)^2+1)-4x^ix^j}{((x^1)^2+(x^2)^2+1)^2},
    \end{equation*}
    and
    \begin{equation*}
        g^*(X_i,X_j)\|d\sigma\|_{g^*}^2(x^1,x^2) = 4\delta_{ij}\frac{(x^1)^2+(x^2)^2}{((x^1)^2+(x^2)^2+1)^2}.
    \end{equation*}
    We have 
    \begin{equation*}
        (\nabla^g_{X_i}d\sigma)(X_j)(x^1,x^2) = -\frac{2\delta_{ij}}{((x^1)^2+(x^2)^2+1)^2}
    \end{equation*}
    for $(x^1,x^2)\in\mathbb{R}^2$.
    We also have
    \begin{equation*}
        (\Delta_g \sigma)(x^1,x^2) = -\frac{2}{(x^1)^2+(x^2)^2+1},\quad (x^1,x^2)\in\mathbb{R}^2,
    \end{equation*}
    which implies $\nabla^g d\sigma = \frac{1}{2}\Delta_g\sigma\cdot g$.
    Since the statistical manifold $(M,g,\nabla)$ is conjugate symmetric, we also obtain
    \begin{equation}
        \label{conjcovc}
        (\nabla^g_X C)(Y,Z,W) = \frac{1}{2}\Delta_g\sigma \operatorname{sym}(g(X,\cdot)\otimes g)(Y,Z,W)
    \end{equation}
    for each $X,Y,Z,W\in\Gamma(TM)$.\\
\end{example}

\begin{theorem}
    Let $(M,g,\nabla)$ be a compact statistical manifold with divisible cubic form.
    If $(M,g,\nabla)$ is non-Riemannian conjugate symmetric, and $(M,g)$ is an Einstein manifold, then $(M,g)$ must be isomorphic to the Euclidean n-sphere $S^n(c)$ of some constant curvature $c>0$.\\
\end{theorem}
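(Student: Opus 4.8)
The plan is to reduce the statement to Obata's rigidity theorem for the round sphere. Since $(M,g,\nabla)$ is conjugate symmetric, Proposition \ref{conj} supplies $f\in C^{\infty}(M)$ with $\nabla^g d\sigma = f\,g$, and the remark following that proposition identifies $f = \frac1n\Delta_g\sigma$. Being non-Riemannian means $C\neq 0$, hence $d\sigma\not\equiv 0$ and $\sigma$ is nonconstant. Thus the task becomes: classify the compact Einstein manifolds $(M,g)$ carrying a nonconstant solution of the concircular equation $\nabla^g d\sigma = f\,g$.

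The first and central step is to determine $f$ as a function of $\sigma$. Differentiating $\nabla^g d\sigma = f\,g$ once more, antisymmetrizing, and applying the Ricci identity to the $1$-form $d\sigma$ gives
\[
 df(X)\,g(Y,Z) - df(Y)\,g(X,Z) = -\,d\sigma\bigl(R^g(X,Y)Z\bigr),\qquad X,Y,Z\in\Gamma(TM).
\]
Contracting over the pair $(Y,Z)$ against an orthonormal frame, using the identity $\sum_i g\bigl(R^g(X,e_i)e_i,W\bigr)=\operatorname{Ric}^g(X,W)$ together with the Einstein condition $\operatorname{Ric}^g=\lambda g$ (with $\lambda$ constant, automatic for $n\ge 3$ by Schur's lemma), I obtain
\[
 (n-1)\,df = -\lambda\,d\sigma .
\]
Hence $f = k\sigma + \mathrm{const}$ with $k=-\frac{\lambda}{n-1}$, so that $\nabla^g d\sigma = (k\sigma+\mathrm{const})\,g$. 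I expect this curvature computation—fixing the sign in the Ricci identity and the trace convention exactly—to be the main obstacle; the rest is a dichotomy on the sign of $k$.

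Next I would use compactness to force $k<0$. If $k=0$, then $f$ is constant and $\int_M f\,dV_g=\frac1n\int_M\Delta_g\sigma\,dV_g=0$ forces $f\equiv 0$, so $\sigma$ is harmonic and hence constant, contradicting non-Riemannianity. If $k\neq 0$, I absorb the constant by adding a constant to $\sigma$ (which changes neither $d\sigma$ nor $C$), so that $\nabla^g d\sigma = k\,\sigma\,g$; taking the trace gives $\Delta_g\sigma = nk\,\sigma$, and integrating $\sigma\,\Delta_g\sigma$ over the compact manifold yields
\[
 nk\int_M \sigma^2\,dV_g = -\int_M \|\operatorname{grad}_g\sigma\|_g^2\,dV_g \le 0,
\]
so $k>0$ would force $\sigma\equiv 0$, again a contradiction. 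Therefore necessarily $k<0$.

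Finally, writing $k=-c$ with $c=\frac{\lambda}{n-1}>0$, the nonconstant function $\sigma$ satisfies $\nabla^g d\sigma + c\,\sigma\,g = 0$ on the compact, hence complete, Riemannian manifold $(M,g)$. By Obata's theorem this equation characterizes the round sphere, so $(M,g)$ is isometric to $S^n(c)$ of constant sectional curvature $c=\frac{\lambda}{n-1}>0$, which is the assertion. The only genuinely delicate point is the passage producing $(n-1)\,df=-\lambda\,d\sigma$; once it is in hand, the compactness dichotomy and the appeal to Obata are routine.
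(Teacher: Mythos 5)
Your proposal is correct and follows essentially the same route as the paper: the central computation, commuting derivatives of the concircular equation $\nabla^g d\sigma=f\,g$ and tracing against the Einstein condition to get $(n-1)\,df=-\lambda\,d\sigma$, is exactly the identity the paper derives in the form $d\Delta_g\sigma=\tfrac{1}{n}d\Delta_g\sigma-c\,d\sigma$. The only difference is cosmetic: the paper then delegates to Theorem 3.2 of \cite{math8040469} on concircular vector fields, whereas you carry out the sign dichotomy on $k$ explicitly and appeal directly to Obata's theorem, which makes the final step more self-contained but is not a genuinely different argument.
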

\begin{proof}
    Let $(M,g,\nabla)$ be a compact statistical manifold with $C=\operatorname{sym}(d\sigma\otimes g)$, and let $\operatorname{Ric}^g = c\cdot g$ hold.
    By Proposition \ref{conj} $({\ref{a}})$ we have
    \begin{equation}
        n\cdot\nabla^g_X\operatorname{grad}_g\sigma = \Delta_g\sigma \cdot X,\quad X\in\Gamma(TM).
    \end{equation}
    Assume that $\Delta_g\sigma=0$. 
    Then, since $M$ is compact we obtain that $\sigma$ is a constant, which implies that $(M,g,\nabla)$ is Riemannian.
    Hence, $\Delta_g\sigma$ is not equal to $0$. 

    If there is a constant $\alpha$ such that $d\sigma = \alpha\cdot d\Delta_g\sigma$, Theorem 3.2 in \cite{math8040469} completes the proof.
    If we let $\{e_1,\dots,e_n\}$ be an orthonormal frame of $(M,g)$, for any $X\in\Gamma(TM)$ we have
    \begin{equation*}
        \begin{split}
            d\Delta_g\sigma (X) &= g\left(\operatorname{grad}_g\Delta_g\sigma,X\right)\\
            &= \sum_{i=1}^ng\left(\nabla^g_{e_i}\nabla^g_{e_i}\operatorname{grad}_g\sigma-\nabla^g_{\nabla^g_{e_i}e_i}\operatorname{grad}_g\sigma,X\right) - \operatorname{Ric}^g(\operatorname{grad}_g\sigma,X)\\
            &= \sum_{i=1}^ng\left(\frac{1}{n}e_i\left(\Delta_g \sigma\right) e_i,X\right) - c\cdot g(\operatorname{grad}_g\sigma,X)\\
            &= \frac{1}{n}d\Delta_g\sigma(X) - c\cdot d\sigma(X).
        \end{split}
    \end{equation*}
\end{proof}

An example of a statistical manifold on the Euclidean sphere with divisible cubic form is given in \cite{NOGUCHI1992197}, the first example of page 215.\\

An affine connection $\nabla$ on $M$ is called \textit{complete} if any geodesic of $\nabla$ can always be extended as a geodesic to have a parameter from $-\infty$ to $\infty$.
This is also equivalent to the exponential map $\operatorname{exp}_p$ of $\nabla$ being defined on all of $T_pM$, for any $p\in M$.
A Riemannian manifold is called complete if its Levi-Civita connection is complete.
The following proposition derives the completeness of an affine connection on a complete Riemannian manifold, including the ones on statistical manifolds with divisible cubic forms.
See \cite{BO} for proof.\\

\begin{proposition}
    \label{compconns}
    Let $(M, g)$ be a complete Riemannian manifold and \(\nabla\) an affine connection on $M$. 
    Suppose there exists a smooth function $\sigma \in C^{\infty}(M)$, bounded from below on $M$, such that for all vector fields $X, Y\in \Gamma(TM)$, the equation
    \begin{equation*}
        \nabla_X Y = \nabla^g_X Y + \alpha_1 d\sigma(X)Y
        + \alpha_2 d\sigma(Y)X
        + \alpha_3 g(X, Y)\operatorname{grad}_g\sigma
    \end{equation*}
    holds, where $\alpha_1, \alpha_2, \alpha_3$ are real constants satisfying $\alpha_1 + \alpha_2 + \alpha_3 \geq 0$.
    Then, the affine connection $\nabla$ is complete.\\
\end{proposition}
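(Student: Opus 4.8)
The plan is to show that along any geodesic of $\nabla$ the $g$-speed cannot blow up in finite time, so that the geodesic remains inside a compact region of the complete Riemannian manifold $(M,g)$ and can therefore be prolonged. First I would rewrite the geodesic equation in terms of the Levi-Civita connection. If $\gamma$ is a geodesic of $\nabla$, then setting $X=Y=\dot\gamma$ in the hypothesis and using $\nabla_{\dot\gamma}\dot\gamma=0$ gives
\begin{equation*}
    \nabla^g_{\dot\gamma}\dot\gamma = -(\alpha_1+\alpha_2)\,d\sigma(\dot\gamma)\,\dot\gamma - \alpha_3\, g(\dot\gamma,\dot\gamma)\,\operatorname{grad}_g\sigma .
\end{equation*}
The constant-speed property of $\nabla^g$-geodesics is lost, so the quantity to control is $E(t):=g(\dot\gamma(t),\dot\gamma(t))$.

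The main step is a short computation of $\dot E$. Since $\nabla^g$ is metric we have $\frac{dE}{dt}=2g(\nabla^g_{\dot\gamma}\dot\gamma,\dot\gamma)$, and since $d\sigma(\dot\gamma)=\frac{d}{dt}(\sigma\circ\gamma)$, substituting the geodesic equation collapses both first-order contributions into the single combination $\alpha_1+\alpha_2+\alpha_3$. Writing $\beta:=\alpha_1+\alpha_2+\alpha_3$ and $s(t):=\sigma(\gamma(t))$, I expect to obtain the linear ODE
\begin{equation*}
    \dot E = -2\beta\, \dot s\, E ,
\end{equation*}
which integrates explicitly to $E(t)=E(t_0)\exp\!\big(-2\beta\,(s(t)-s(t_0))\big)$. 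This is the crux of the argument: the awkward first-order terms of the difference tensor assemble into an exact derivative, rendering the speed equation solvable in closed form.

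With this formula the hypotheses do their work at once. Because $\beta\geq 0$ and $\sigma$ is bounded below, say by $c_0$, we have $-2\beta\, s(t)\leq -2\beta c_0$, so $E(t)\leq E(t_0)\exp\!\big(2\beta\,(s(t_0)-c_0)\big)$ is bounded by a constant independent of $t$ on the entire interval of definition of $\gamma$. Hence the $g$-speed $|\dot\gamma|_g$ is uniformly bounded, and the $g$-length of $\gamma$ over any finite time interval $[t_0,b)$ is finite.

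Finally I would upgrade the speed bound to completeness. Suppose $\gamma$ is a maximal geodesic of $\nabla$ defined on $(a,b)$ with $b<\infty$. The finite $g$-length forces $\gamma([t_0,b))$ to lie in a $g$-bounded subset of $(M,g)$, which is relatively compact by the Hopf--Rinow theorem. Combined with the uniform bound on $|\dot\gamma|_g$, the lifted curve $t\mapsto(\gamma(t),\dot\gamma(t))$ then stays in a compact subset of $TM$, and the escape lemma for the geodesic spray contradicts maximality; thus $b=\infty$, and symmetrically $a=-\infty$, so $\nabla$ is complete. I anticipate that the only delicate point is this last escape-lemma step — ensuring the bound is placed on the $TM$-valued lift rather than merely on the base curve — whereas the algebraic heart of the proof is the exact integration of $\dot E$.
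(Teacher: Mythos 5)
Your argument is correct: the identity $\dot E=-2\beta\dot s E$ with $\beta=\alpha_1+\alpha_2+\alpha_3$ integrates to $E(t)=E(t_0)e^{-2\beta(s(t)-s(t_0))}$, the lower bound on $\sigma$ together with $\beta\geq 0$ gives a uniform speed bound in both time directions, and the Hopf--Rinow plus escape-lemma step is carried out properly on the lift in $TM$. The paper itself does not prove this proposition but defers to the cited reference, and your computation is exactly the standard argument used there, so nothing further is needed.
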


\subsection{The projectively equivalent Levi-Civita connection}
Recall that two affine connections $\nabla$ and $\tilde{\nabla}$ are said to be \textit{projectively equivalent} if there exists a closed form $\lambda$ such that
\begin{equation*}
    \tilde{\nabla}_XY= \nabla_XY + \lambda(X)Y + \lambda(Y)X, \quad X,Y\in\Gamma(TM).
\end{equation*}
An affine connection is called \textit{projectively flat} if it is projectively equivalent to a flat connection around any point of the manifold.
From equation $(\ref{proj})$, the Levi-Civita connection of $\tilde{g}=e^{\sigma}g$ and $\nabla$ are projectively equivalent on a statistical manifold $(M,g,\nabla)$ with divisible cubic form $C=\operatorname{sym}(d\sigma \otimes g)$.
If the Levi-Civita connection on a Riemannian manifold is projectively flat, then the Riemannian manifold has constant curvature.
Thus, we obtain the following theorem.\\

\begin{theorem}
    Let $(M,g,\nabla)$ be a statistical manifold with divisible cubic form $C=\operatorname{sym}(d\sigma\otimes g)$ and $\tilde{g}=e^{\sigma}g$.
    The affine connection $\nabla$ is projectively flat if and only if $(M,\tilde{g})$ is of constant curvature.\\
\end{theorem}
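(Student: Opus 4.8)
The plan is to reduce the statement to the classical theorem of Beltrami, combined with the observation that projective flatness depends only on the projective class of the connection. The starting point is equation $(\ref{proj})$, which exhibits $\nabla$ and the Levi-Civita connection $\nabla^{\tilde{g}}$ of $\tilde{g}=e^{\sigma}g$ as projectively equivalent via the one-form $d\sigma$:
\[
\nabla^{\tilde{g}}_X Y = \nabla_X Y + d\sigma(X)Y + d\sigma(Y)X,\quad X,Y\in\Gamma(TM).
\]
Since $d\sigma$ is exact, it is in particular closed, so this is a genuine projective equivalence in the sense defined above.

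The key step is to show that projective flatness is preserved under projective equivalence, so that $\nabla$ is projectively flat if and only if $\nabla^{\tilde{g}}$ is. Assume $\nabla$ is projectively flat. Then around any point there is a neighborhood $U$, a flat connection $\nabla^0$ on $U$, and a closed one-form $\mu$ on $U$ with $\nabla^0_X Y = \nabla_X Y + \mu(X)Y + \mu(Y)X$. Substituting the relation above, I would obtain
\[
\nabla^0_X Y = \nabla^{\tilde{g}}_X Y + (\mu - d\sigma)(X)Y + (\mu - d\sigma)(Y)X,
\]
and since $\mu - d\sigma$ is again closed, $\nabla^{\tilde{g}}$ is projectively equivalent on $U$ to the flat connection $\nabla^0$. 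As the point and neighborhood were arbitrary, $\nabla^{\tilde{g}}$ is projectively flat. The converse implication is symmetric: starting from a local flat model for $\nabla^{\tilde{g}}$ with closed form $\mu$, the same substitution produces $\mu + d\sigma$, which is closed, and exhibits $\nabla$ as locally projectively equivalent to a flat connection.

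Finally, I would invoke the classical characterization (Beltrami's theorem): the Levi-Civita connection of a Riemannian metric is projectively flat precisely when the metric has constant sectional curvature. The ``only if'' direction is the fact quoted immediately before the statement, while the ``if'' direction is Beltrami's theorem. Applying this to the metric $\tilde{g}$ shows that $\nabla^{\tilde{g}}$ is projectively flat if and only if $(M,\tilde{g})$ has constant curvature. Chaining this with the equivalence established in the previous paragraph yields the theorem.

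The only delicate point is the invariance step, and even there the content is light: one must simply verify that the one-form relating $\nabla^{\tilde{g}}$ to the local flat model remains closed, which is immediate because $d\sigma$ is exact and the closed one-forms form a vector space. Everything else is a direct substitution together with a citation of Beltrami's theorem, so I expect no computational obstacle.
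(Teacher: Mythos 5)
Your proposal is correct and follows essentially the same route as the paper: the paper derives the theorem directly from the projective equivalence of $\nabla$ and $\nabla^{\tilde{g}}$ in equation $(\ref{proj})$ together with Beltrami's characterization of projectively flat Levi-Civita connections. Your explicit verification that projective flatness is invariant under projective equivalence (closedness of the difference of one-forms) is exactly the step the paper leaves implicit, and nothing is missing.
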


The notion of statistical curvature on a statistical manifold is introduced in \cite{SHS}.\\
\begin{definition}
    The \textit{statistical curvature tensor field} $S$ of a statistical manifold $(M,g,\nabla)$ is defined by\\
    \begin{equation*}
        S = \frac{1}{2}(R+\overline{R}).
    \end{equation*}
    In particular, we have $S=R$ if $(M,g,\nabla)$ is conjugate symmetric.\\
\end{definition}

Let $\tilde{R}$ be the curvature tensor field of $\nabla^{\tilde{g}}$.
The following equations hold for $X,Y,Z\in\Gamma(TM)$$:$
\begin{equation*}
    \begin{split}
        R(X,Y)Z  &= \tilde{R}(X,Y)Z + \frac{1}{2}\|d\sigma\|_g^2\left(g(Y,Z)X-g(X,Z)Y\right)\\
        & - (\nabla^g_Xd\sigma)(Z)Y + (\nabla^g_Yd\sigma)(Z)X,\\
        \overline{R}(X,Y)Z &= \tilde{R}(X,Y)Z + \frac{1}{2}\|d\sigma\|_g^2\left(g(Y,Z)X-g(X,Z)Y\right)\\
                    & - g(X,Z)\nabla^g_Y\operatorname{grad}_g\sigma + g(Y,Z)\nabla^g_X\operatorname{grad}_g\sigma.
    \end{split}
\end{equation*}
For each pair of orthonormal vectors $X,Y$ of $(M,g)$, we have 
\begin{equation}
    \label{seccurvtilde}
        \frac{\tilde{g}\left(\tilde{R}(X,Y)Y,X\right)}{\tilde{g}(X,X)\tilde{g}(Y,Y)-\tilde{g}(X,Y)^2} = e^{\sigma}\left(g(S(X,Y)Y,X) - \frac{1}{2}\left((\nabla^g_Xd\sigma)(X) + (\nabla^g_Yd\sigma)(Y) + \|d\sigma\|_g^2\right)\right).
\end{equation}
Therefore, the affine connection $\nabla$ is projectively flat if and only if the right-hand side of equation $(\ref{seccurvtilde})$ is constant for all pairs of orthonormal vectors on $(M,g)$.\\

The projective equivalence between $\nabla$ and $\nabla^{\tilde{g}}$ lets us investigate the geodesics of $\nabla$ much easier than general affine connections.
This is because a geodesic of $\nabla$ becomes a geodesic of $\nabla^{\tilde{g}}$ under some parameter transformation.
Indeed, let $\gamma(t),\,t\in[0,a)$ be a geodesic of $\nabla$. 
Define $s:[0,a)\to[0,\alpha)$ as
\begin{equation}
    \label{paramtrans}
    s(t) = \int_0^t e^{2\sigma(\gamma(r))} dr,\quad t\in[0,a).
\end{equation}
Here, we obtain
\begin{equation*}
    \frac{ds}{dt}(t) = e^{2\sigma(\gamma(t))}>0, \quad t\in[0,a),
\end{equation*}
thus, the parameter transformation strictly increases.
We define a parametrized curve by $\tilde{\gamma}=\gamma\circ s^{-1}$.
Since we have
\begin{equation*}
    \frac{dt}{ds} = e^{-2\sigma(\gamma\circ s^{-1})},
\end{equation*}
the velocity of $\tilde{\gamma}$ is
\begin{equation*}
    \frac{d\tilde{\gamma}}{ds} = e^{-2\sigma(\tilde{\gamma})}\frac{d\gamma}{dt}\circ s^{-1}.
\end{equation*}
Consequently, the next calculation shows that $\tilde{\gamma}$ is a geodesic of $\nabla^{\tilde{g}}$$:$
\begin{equation*}
    \begin{split}
        \nabla^{\tilde{g}}_{\frac{d\tilde{\gamma}}{ds}}\frac{d\tilde{\gamma}}{ds}&= -2\frac{d}{ds}(\sigma(\tilde{\gamma}))e^{-2\sigma(\tilde{\gamma})}\frac{d\gamma}{dt}\circ s^{-1}+e^{-4\sigma(\tilde{\gamma})}\nabla^{\tilde{g}}_{\frac{d\gamma}{dt}}\frac{d\gamma}{dt}\circ s^{-1}\\
        &=-2d\sigma\left(\frac{d\tilde{\gamma}}{ds}\right)\frac{d\tilde{\gamma}}{ds} + 2e^{-4\sigma(\tilde{\gamma})}d\sigma\left(\frac{d\gamma}{dt}\circ s^{-1}\right)\frac{d\gamma}{dt}\circ s^{-1}\\
        &=0.
    \end{split}
\end{equation*}
Here, the projective equivalence $(\ref{proj})$ is used in the second line.\\

\subsection{The volume form and canonical contrast function}
In this subsection, we define the volume form and the canonical contrast function on statistical manifolds with divisible cubic forms.

The following theorem provides an existence of a volume form parallel with respect to the affine connection on a statistical manifold with divisible cubic form.
Note that this does not necessarily hold for general statistical manifolds.
If the statistical structure is induced by an affine immersion, the volume form coincides with that defined by $(\ref{volform})$.\\

\begin{theorem}
    Let $(M^n,g,\nabla)$ be a statistical manifold with divisible cubic form $C=\operatorname{sym}(d\sigma\otimes g)$.
    Then, the volume form $\theta = e^{-\frac{n+2}{2}\sigma}\cdot\omega_g$ is parallel with respect to $\nabla$, where $\omega_g$ is the standard volume form on $(M,g)$.\\
\end{theorem}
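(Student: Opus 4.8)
The plan is to decompose $\nabla = \nabla^g + K$ and exploit the fact that the Riemannian volume form $\omega_g$ is parallel with respect to the Levi-Civita connection, i.e. $\nabla^g\omega_g = 0$. For any affine connection and any top-degree form $\omega$, the covariant derivative acts as a derivation, so that for an orthonormal frame $\{e_1,\dots,e_n\}$ of $(M,g)$ one has $(\nabla_X\omega)(e_1,\dots,e_n) = X(\omega(e_1,\dots,e_n)) - \sum_{i=1}^n \omega(e_1,\dots,\nabla_X e_i,\dots,e_n)$. Substituting $\nabla = \nabla^g + K$ and using $\nabla^g\omega_g = 0$ reduces the computation to the linear-algebraic identity $\sum_{i=1}^n \omega_g(e_1,\dots,K_X e_i,\dots,e_n) = \operatorname{tr}(K_X)\,\omega_g(e_1,\dots,e_n)$, where $K_X$ denotes the endomorphism $Y\mapsto K_X Y$. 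Thus I would first reduce the entire statement to the single scalar quantity $\operatorname{tr}(K_X)$, obtaining $\nabla_X\omega_g = -\operatorname{tr}(K_X)\,\omega_g$.

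The key step is then to compute this trace from equation $(\ref{difften})$. Taking $\operatorname{tr}(K_X) = \sum_{i=1}^n g(K_X e_i, e_i)$ and reading off $K_X e_i = -\tfrac{1}{2}\big(d\sigma(X)e_i + d\sigma(e_i)X + g(X,e_i)\operatorname{grad}_g\sigma\big)$, the three summands contribute $n\,d\sigma(X)$, $d\sigma(X)$, and $d\sigma(X)$ respectively, where the last two follow from $\sum_i g(X,e_i)\,d\sigma(e_i) = d\sigma(X)$. This yields $\operatorname{tr}(K_X) = -\tfrac{n+2}{2}\,d\sigma(X)$, and hence $\nabla_X\omega_g = \tfrac{n+2}{2}\,d\sigma(X)\,\omega_g$. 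Equivalently, one may bypass the frame computation using $(\ref{relcubdiff})$: since $C(X,e_i,e_i) = -2g(K_X e_i, e_i)$, summing gives $\operatorname{tr}(K_X) = -\tfrac{1}{2}\sum_i C(X,e_i,e_i)$, and the divisibility $C=\operatorname{sym}(d\sigma\otimes g)$ makes $\sum_i C(X,e_i,e_i) = (n+2)\,d\sigma(X)$ immediate.

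Finally, I would write $\theta = h\,\omega_g$ with $h = e^{-\frac{n+2}{2}\sigma}$ and differentiate by the Leibniz rule, $\nabla_X\theta = (Xh)\,\omega_g + h\,\nabla_X\omega_g$. Since $Xh = -\tfrac{n+2}{2}\,d\sigma(X)\,h$, the two terms cancel exactly, giving $\nabla_X\theta = 0$ for every $X\in\Gamma(TM)$, which is the desired conclusion. The computation is essentially routine, and I do not anticipate a genuine obstacle; the only informative step is the trace calculation, which is precisely what pins down the exponent $-\tfrac{n+2}{2}$ in the definition of $\theta$, since any other normalization of the conformal factor would fail to cancel the contribution of $\operatorname{tr}(K_X)$. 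The remark that $\theta$ coincides with the induced volume form $(\ref{volform})$ in the affine-immersion case then follows by noting that both are $\nabla$-parallel and agree up to the positive factor $h$.
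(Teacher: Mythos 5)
Your proposal is correct and follows essentially the same route as the paper: compute $\operatorname{tr}(K_X)=-\tfrac{n+2}{2}d\sigma(X)$ from the divisible cubic form, use $\nabla_X\omega_g=-\operatorname{tr}(K_X)\,\omega_g$, and cancel against the derivative of the conformal factor via the Leibniz rule. The only difference is that you explicitly justify the identity $\nabla_X\omega_g=-\operatorname{tr}(K_X)\,\omega_g$ from $\nabla^g\omega_g=0$, which the paper uses implicitly.
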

\begin{proof}
    Let $\{e_1,\dots,e_n\}$ be an orthonormal frame of $(M,g)$. 
    For any $X\in\Gamma(TM)$, we have
    \begin{equation*}
        \begin{split}
            \mathrm{tr}K_X &= \sum_{i=1}^{n}g\left(K_X e_i,e_i\right)\\
            &=-\frac{1}{2}\sum_{i=1}^{n}\operatorname{sym}(d\sigma\otimes g)\left(X,e_i,e_i\right)\\
            &=-\frac{n+2}{2}d\sigma (X).
        \end{split}
    \end{equation*}
    Therefore, if we let $\theta=e^{-\frac{n+2}{2}\sigma}\omega_g$, for any $X\in\Gamma(TM)$ we have
    \begin{equation*}
        \begin{split}
            \nabla_X \theta &= d\left(e^{-\frac{n+2}{2}\sigma}\right)(X)\cdot\omega_g + e^{-\frac{n+2}{2}\sigma}\nabla_X\omega_g\\
            % &= e^{-\frac{n+2}{2}\sigma}\left(-\frac{n+2}{2}d\sigma(X)\cdot\omega_g + K_X\omega_g\right)\\
            &= e^{-\frac{n+2}{2}\sigma}\left(-\frac{n+2}{2}d\sigma(X)\cdot\omega_g - \mathrm{tr}K_X\cdot\omega_g\right) = 0.
        \end{split}
    \end{equation*}
\end{proof}

\begin{remark}
    The Ricci curvature tensor field of an affine connection $\nabla$ is symmetric if and only if there exists a volume form $\theta$ around any point on the manifold such that $\nabla \theta = 0$.
    Thus, the Ricci curvature tensor field of the affine connection on a statistical manifold with divisible cubic form is always symmetric.\\
\end{remark}

The canonical contrast function on statistical manifolds with divisible cubic forms is provided by the following theorem.\\

\begin{theorem}
    \label{contfunc}
  Let $(M,g,\nabla)$ be a statistical manifold with divisible cubic form with $C=\operatorname{sym}(d\sigma\otimes g)$ and let $\tilde{g}=e^{\sigma}g$.
  The function $\rho:M\times M\to\mathbb{R}$ defined by
  \begin{equation*}
    \rho(p,q) = e^{-\sigma(p)}(\tilde{d}(p,q))^2,\quad p,q\in M
  \end{equation*}
  is a contrast function on $(M,g,\nabla)$. Here, $\tilde{d}$ is the distance function of $(M,\tilde{g})$.\\
\end{theorem}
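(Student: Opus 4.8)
The plan is to verify the two defining axioms of a contrast function and then to check that the structure induced by $\rho$ through $g(X,Y)=-\rho(X|Y)$ and $g(\nabla_XY,Z)=-\rho(XY|Z)$ recovers $(g,\nabla)$. Axiom (1) is immediate: since $e^{-\sigma(p)}>0$ and $\tilde{d}(p,q)^2\geq 0$ with equality exactly when $p=q$ (as $\tilde{d}$ is a genuine Riemannian distance), we get $\rho(p,q)\geq 0$ and $\rho(p,q)=0$ iff $p=q$. Every remaining condition concerns derivatives of $\rho$ evaluated on the diagonal, so it suffices to work on a neighborhood of the diagonal, where $\phi(p,q):=\tilde{d}(p,q)^2$ is smooth (inside the injectivity radius of $\tilde{g}$).

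The analytic heart of the argument is the classical fact that $\tfrac12\phi$ is the canonical contrast function of the Levi-Civita structure $(\tilde{g},\nabla^{\tilde{g}})$. Concretely, at the diagonal one has $\phi(X|)=\phi(|X)=0$, together with $\phi(X|Y)=-2\tilde{g}(X,Y)$ and $\phi(XY|Z)=-2\tilde{g}(\nabla^{\tilde{g}}_XY,Z)$. I would establish these expansions by writing $\phi$ in $\tilde{g}$-normal coordinates (equivalently, through the Jacobi-field expansion of the squared distance), which is where the real content sits; everything after this is algebra.

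Next I would apply the Leibniz rule, using crucially that the conformal factor $e^{-\sigma(p)}$ depends only on the first variable. Writing $\rho=u\,\phi$ with $u(p)=e^{-\sigma(p)}$, and noting that $Z^R\phi$ vanishes on the diagonal, the derivatives of $u$ drop out of $\rho(X|Y)$, so $\rho(X|Y)=e^{-\sigma}\phi(X|Y)=-2e^{-\sigma}\tilde{g}(X,Y)=-2g(X,Y)$ by $\tilde{g}=e^{\sigma}g$. In particular $\rho(X|X)=-2g(X,X)<0$ for $X\neq 0$, which is axiom (2). For the connection, expanding $X^LY^LZ^R\rho$ by Leibniz leaves on the diagonal only the terms $(Y^Lu)\phi(X|Z)$, $(X^Lu)\phi(Y|Z)$ and $u\,\phi(XY|Z)$; substituting $Y^Lu=-e^{-\sigma}d\sigma(Y)$ together with the distance expansions and using $e^{-\sigma}\tilde{g}=g$ yields
\[
\rho(XY|Z)=2\,d\sigma(Y)g(X,Z)+2\,d\sigma(X)g(Y,Z)-2\,g(\nabla^{\tilde{g}}_XY,Z).
\]
Finally I would insert the projective relation $(\ref{proj})$, namely $\nabla^{\tilde{g}}_XY=\nabla_XY+d\sigma(X)Y+d\sigma(Y)X$, so that $g(\nabla^{\tilde{g}}_XY,Z)=g(\nabla_XY,Z)+d\sigma(X)g(Y,Z)+d\sigma(Y)g(X,Z)$; the two $d\sigma$ terms cancel exactly, leaving $\rho(XY|Z)=-2g(\nabla_XY,Z)$. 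Thus $\rho$ satisfies both axioms and induces the connection $\nabla$.

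The main obstacle is the step establishing the second-order diagonal behavior of $\tilde{d}^2$, i.e. that $\tfrac12\tilde{d}^2$ is a contrast function for $(\tilde{g},\nabla^{\tilde{g}})$; once those expansions are available, the remainder is bookkeeping, with the $q$-independence of $e^{-\sigma(p)}$ killing the unwanted first-order terms, the conformal identity $e^{-\sigma}\tilde{g}=g$ converting everything into $g$, and the projective relation $(\ref{proj})$ absorbing precisely the first-derivative contributions of $\sigma$. One point to watch is the normalization: with $\tilde{d}^2$ rather than $\tfrac12\tilde{d}^2$ the computation returns $\rho(X|Y)=-2g(X,Y)$ and $\rho(XY|Z)=-2g(\nabla_XY,Z)$, so a factor $\tfrac12$ is needed to recover $(g,\nabla)$ on the nose; in either normalization axioms (1) and (2) hold verbatim and the induced connection is $\nabla$.
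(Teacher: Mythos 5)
Your proposal is correct and follows essentially the same route as the paper: both arguments reduce to the fact that the squared distance of $\tilde{g}$ is (up to normalization) the canonical contrast function of $(\tilde{g},\nabla^{\tilde{g}})$, then apply the Leibniz rule together with $\tilde{g}=e^{\sigma}g$ and the projective relation $(\ref{proj})$ to recover $\nabla$. Your remark about the factor $\tfrac12$ is accurate --- the paper's computation implicitly uses the normalization $\tfrac12\tilde{d}^2$, so with $\rho=e^{-\sigma}\tilde{d}^2$ as stated the induced metric is $2g$ rather than $g$ --- and your Leibniz bookkeeping, which keeps the conformal factor attached to the first variable, is in fact more careful than the paper's own expansion.
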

\begin{proof}
    Note that $(\tilde{d})^2$ is a contrast function of the Riemannian statistical manifold $(M,\tilde{g},\nabla^{\tilde{g}})$.
    It is clear that $\rho$ is a contrast function.
    We now need to prove that $\rho$ induces the statistical structure $(g,\nabla)$.
    For any $X,Y,Z\in\Gamma(TM)$ and $x\in M$, we have
    \begin{equation*}
        \begin{split}
            \rho(X|Y)(p) &= -\rho(|XY)(p)\\
                        &= -e^{-\sigma(p)}(X^LY^L(\tilde{d})^2)(p,p)\\
                        &= -e^{-\sigma(p)}\tilde{g}_p(X,Y)\\
                        &= -g_p(X,Y),
        \end{split}
    \end{equation*}
    \begin{equation*}
        \begin{split}
            \rho(XY|Z)(p) &= (X^RY^RZ^L\rho)(p,p)\\
                        % &= X^RY^R(e^{-\sigma}Z^L(\tilde{d})^2)(p,p)\\
                        % &= -X^R(d\sigma(Y)e^{-\sigma}Z^L(\tilde{d})^2)(p,p) + X^R(e^{-\sigma}Y^RY^L(\tilde{d})^2)(p,p)\\
                        &= (XYe^{-\sigma})(p)(Z^L(\tilde{d})^2)(p,p) + (Ye^{-\sigma})(p)(X^RZ^L(\tilde{d})^2)(p,p)\\
                        &+ (Xe^{-\sigma})(p)(Y^RZ^L(\tilde{d})^2)(p,p) + e^{-\sigma(p)}(X^RY^RZ^L(\tilde{d})^2)(p,p)\\
                        &= d\sigma_p(Y)e^{-\sigma(p)}\tilde{g}_p(X,Z) + d\sigma_p(X)e^{-\sigma(p)}\tilde{g}_p(Y,Z) - e^{-\sigma(p)}\tilde{g}_p(\nabla^{\tilde{g}}_X Y,Z)\\
                        &= -g_p(\nabla_XY,Z)
                        % &= d\sigma_p(Z)e^{\sigma(p)}(X^RY^R(d^*)^2)(p,p) + e^{\sigma(p)}(X^RY^RZ^L(d^*)^2)(p,p)\\
                        % &= d\sigma_p(Z)e^{\sigma(p)}g^*_p(X,Y) - e^{\sigma(p)}g^*_p(\nabla^{g^*}_X Y,Z)\\
                        % &= d\sigma_p(Z)g_p(X,Y) - g_p(\nabla^{g^*}_X Y,Z)\\
                        % &= g_p(X,Y)g_p(\operatorname{grad}_g\sigma,Z) \\
                        % &- g_p\left(\nabla^g_XY - \frac{1}{2}\left(d\sigma_p(X)Y + d\sigma_p(Y)X - g_p(X,Y)\operatorname{grad}_g\sigma\right),Z\right)\\
                        % &= -g_p(\nabla_X Y,Z).
        \end{split}
    \end{equation*}
\end{proof}

\begin{remark}
    Denote $\overline{\nabla}$ the conjugate connection. 
    If $(g,\overline{\nabla})$ is a statistical structure induced by an affine immersion, this canonical contrast function of $(M,g,\nabla)$ coincides with the contrast function of $(M,g,\overline{\nabla})$ given in Example \ref{ccf1}.\\
\end{remark}
\begin{remark}
    If $(M,\tilde{g})$ is complete, for each $p,q\in M$ there is a geodesic $\tilde{\gamma}(s),\, s\in [0,1]$ of $\nabla^{\tilde{g}}$ such that $\tilde{\gamma}(0)=p$, $\tilde{\gamma}(1)=q$, and
    \begin{equation*}
        \tilde{d}(p,q) =\int_0^1 \sqrt{\tilde{g}\left(\frac{d\tilde{\gamma}}{ds},\frac{d\tilde{\gamma}}{ds}\right)}\,ds.
    \end{equation*}
    The left-hand side of this equation does not depend on the parametrization of $\tilde{\gamma}$, and the affine connection $\nabla$ is projectively equivalent to $\nabla^{\tilde{g}}$.
    Therefore, if $(M,\tilde{g})$ is complete, for each $p,q\in M$ there exists a geodesic $\gamma(t),\, t\in [0,a]$ of $\nabla$ such that $\gamma(0)=p$, $\gamma(a)=q$, and
    \begin{equation*}
        \rho(p,q) = e^{-\sigma(q)}\left(\int_0^a \sqrt{e^{\sigma\circ\gamma}g\left(\frac{d\gamma}{dt},\frac{d\gamma}{dt}\right)}\,dt\right)^2.
    \end{equation*}
\end{remark}

\section{The Hopf-Rinow type theorem on a statistical manifold with divisible cubic form}\label{sec3}
We derive the geodesic completeness of the affine connection on a statistical manifold with divisible cubic form.
We also derive the completeness of the affine connection using the contrast function.\\

\subsection{Geodesic connectedness of the affine connection}
An affine connection $\nabla$ on a manifold $M$ is called \textit{geodesically connected}, if for any $p,q\in M$ there exists a geodesic $\gamma(t),\,t\in[0,1]$ of $\nabla$ such that $\gamma(0)=p$ and $\gamma(1)=q$.
This is equivalent to the exponential map $\operatorname{exp}_p:D_p \to M$ of $\nabla$ being surjective at each $p\in M$.
Here, $D_p\subset T_p M$ is the maximal open neighborhood of $0\in T_pM$ such that the exponential map can be defined.\\
% The Hopf-Rinow type theorem in Riemannian geometry states that if the Levi-Civita connection on a Riemannian manifold is complete, then it must be geodesically connected.
% In this section, we prove that the affine connection on a statistical manifold with divisible cubic form also has a similar property.\\

\begin{lemma}
    \label{lemcom}
    Let $(M,g,\nabla)$ be a statistical manifold with divisible cubic form $C=\operatorname{sym}(d\sigma\otimes g)$ and $\tilde{g} = e^{\sigma}g$.
    \renewcommand{\labelenumi}{$\operatorname{(\theenumi)}$}
    \begin{enumerate}
        \item If $\nabla$ is complete and $\sigma$ is bounded from below, then $\nabla^{\tilde{g}}$ is also complete. \label{com1}
        \item If $\nabla^{\tilde{g}}$ is complete and $\sigma$ is bounded from above, then $\nabla$ is also complete. \label{com2}
    \end{enumerate}
\end{lemma}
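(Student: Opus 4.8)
The plan is to exploit the projective equivalence $(\ref{proj})$ between $\nabla$ and $\nabla^{\tilde{g}}$ together with the explicit parameter change $(\ref{paramtrans})$ already established above, which identifies the unparametrized geodesics of the two connections. Under this correspondence the entire question reduces to tracking how the geodesic parameter transforms and showing that the reparametrization integral diverges under the stated one-sided bound on $\sigma$. No curvature information is needed; only completeness (extendability of geodesics) and the positivity and boundedness of $e^{2\sigma}$ along geodesics.

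For part $(\ref{com1})$, I would start from an arbitrary $\nabla^{\tilde{g}}$-geodesic $\tilde{\gamma}$ with initial data $(\tilde{\gamma}(0),\tilde{\gamma}'(0))=(p,v)$ and let $\gamma$ be the $\nabla$-geodesic determined by $\gamma(0)=p$ and $\dot{\gamma}(0)=e^{2\sigma(p)}v$. Since $\nabla$ is complete, $\gamma$ is defined on all of $\mathbb{R}$. Applying $(\ref{paramtrans})$, namely $s(t)=\int_0^t e^{2\sigma(\gamma(r))}\,dr$, produces a $\nabla^{\tilde{g}}$-geodesic $\gamma\circ s^{-1}$ which, by the chain-rule computation carried out just before this subsection, has velocity $\dot{\gamma}(0)\,\tfrac{dt}{ds}\big|_{0}=e^{2\sigma(p)}v\cdot e^{-2\sigma(p)}=v$ at $p$, hence the same initial data as $\tilde{\gamma}$ and so coincides with it on the common domain. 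It then remains to see that $s$ exhausts $\mathbb{R}$: writing $\sigma\geq m$ gives $s(t)\geq e^{2m}t$ for $t\geq 0$, and the symmetric estimate for $t\leq 0$ shows $s(t)\to\pm\infty$ as $t\to\pm\infty$. Thus $\gamma\circ s^{-1}$, and therefore the maximal extension of $\tilde{\gamma}$, is defined on all of $\mathbb{R}$, proving $\nabla^{\tilde{g}}$ is complete.

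Part $(\ref{com2})$ is entirely symmetric and run through the inverse reparametrization. Since projective equivalence is symmetric, one may write $\nabla_X Y=\nabla^{\tilde{g}}_X Y+d(-\sigma)(X)Y+d(-\sigma)(Y)X$, so that a $\nabla^{\tilde{g}}$-geodesic becomes a $\nabla$-geodesic under the inverse change $t(s)=\int_0^s e^{-2\sigma(\tilde{\gamma}(u))}\,du$. Starting from an arbitrary $\nabla$-geodesic, realizing its image as a $\nabla^{\tilde{g}}$-geodesic defined on all of $\mathbb{R}$ by completeness of $\nabla^{\tilde{g}}$, and matching initial data exactly as above, I would then use $\sigma\leq M$ to bound $t(s)\geq e^{-2M}s$ for $s\geq 0$ (and symmetrically for $s\leq 0$). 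Hence the $\nabla$-parameter again exhausts $\mathbb{R}$, so $\nabla$ is complete.

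The one point requiring care, and the only genuine obstacle, is the bookkeeping that the correspondence between \emph{maximal} geodesics is faithful. One must verify that the reparametrization is a strictly increasing diffeomorphism of the parameter intervals, which follows from $\tfrac{ds}{dt}=e^{2\sigma}>0$, and that the geodesic obtained by reparametrizing the complete geodesic on the source connection has exactly the prescribed initial point and velocity, so that uniqueness of geodesics forces it to agree with the given geodesic and thereby extend it to the full real line. Once this identification is secured, both assertions follow at once from the divergence of the respective integrals, which is precisely what the one-sided boundedness of $\sigma$ guarantees.
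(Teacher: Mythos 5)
Your proposal is correct and uses essentially the same mechanism as the paper: the projective equivalence between $\nabla$ and $\nabla^{\tilde{g}}$, the explicit parameter change $(\ref{paramtrans})$ identifying their geodesics up to reparametrization, and the one-sided bound on $\sigma$ (i.e.\ $e^{2\sigma}$ bounded away from $0$ or $\infty$) to control the reparametrized interval. The only difference is cosmetic: you build a globally defined geodesic with prescribed initial data and invoke uniqueness, while the paper takes a geodesic defined on a finite interval, shows the reparametrized interval is finite, and extends it using completeness of the other connection.
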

\begin{proof}
    We prove $(\ref{com2})$ first. 
    For an arbitrary geodesic $\gamma(t),\,t\in[0,a)$ of $\nabla$ with $a<\infty$, we prove that $\gamma$ is extendable beyond $a$ as a geodesic.
    Parametrize $\gamma$ by $\tilde{\gamma} = \gamma\circ s^{-1}$ where $s:[0,a)\to[0,\alpha)$ is the parameter transformation defined by $(\ref{paramtrans})$.
    As shown before, $\tilde{\gamma}$ is a geodesic of $\nabla^{\tilde{g}}$.
    Since $\sigma$ is bounded from above, let $\sigma\leq L$ on $M$ for some $L\in\mathbb{R}$.
    We have
    \begin{equation*}
        \alpha = \lim_{t\to a}\int_0^t e^{2\sigma(\gamma(r))} dr \leq a\cdot e^{2L},
    \end{equation*}
    hence $\alpha<\infty$. Since $\nabla^{\tilde{g}}$ is complete, $\tilde{\gamma}$ is extendable beyond $\alpha$ as a geodesic.
    Therefore, if we reparametrize the extended geodesic of $\nabla^{\tilde{g}}$ back to the geodesic of $\nabla$, we obtain the extension of $\gamma$ beyond $a$ as a geodesic.
    Statement $(\ref{com1})$ is proved in the same manner.\\
\end{proof}

\begin{proposition}
    \label{main1}
    Let $(M,g,\nabla)$ be a statistical manifold with divisible cubic form $C=\operatorname{sym}(d\sigma\otimes g)$ and $\tilde{g} = e^{\sigma}g$.
    If $\nabla^{\tilde{g}}$ is complete, then $\nabla$ is geodesically connected.\\
\end{proposition}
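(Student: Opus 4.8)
The plan is to reduce the statement to the classical Hopf--Rinow theorem for the Riemannian metric $\tilde{g}$ and then to transport the resulting connecting geodesic back to the affine connection $\nabla$ through the parameter transformation $(\ref{paramtrans})$. The crucial feature that makes this possible without any hypothesis on $\sigma$ is that, unlike in Lemma~\ref{lemcom}, we only ever work along a single geodesic segment of finite length, on which $\sigma$ is automatically bounded.

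First I would fix arbitrary points $p,q\in M$. Since $\nabla^{\tilde{g}}$ is complete, the Riemannian manifold $(M,\tilde{g})$ is complete, so Hopf--Rinow furnishes a $\nabla^{\tilde{g}}$-geodesic $\tilde{\gamma}(s)$, $s\in[0,\ell]$, with $\tilde{\gamma}(0)=p$ and $\tilde{\gamma}(\ell)=q$. Because its domain $[0,\ell]$ is compact, the continuous function $\sigma\circ\tilde{\gamma}$ attains a finite minimum there; this is the only place completeness of $\nabla^{\tilde{g}}$ enters, and it is exactly what replaces the global boundedness assumption needed in the completeness transfer of Lemma~\ref{lemcom}.

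Next I would reparametrize $\tilde{\gamma}$ into a geodesic of $\nabla$ by inverting $(\ref{paramtrans})$. Setting $t=T(s):=\int_0^s e^{-2\sigma(\tilde{\gamma}(u))}\,du$ gives a smooth strictly increasing map with $\frac{dt}{ds}=e^{-2\sigma(\tilde{\gamma}(s))}$, which is the inverse of the relation $\frac{ds}{dt}=e^{2\sigma(\gamma(t))}$ used in Subsection~3.2. Since $\sigma\circ\tilde{\gamma}$ is bounded from below on $[0,\ell]$, the integrand is bounded from above, so $a:=T(\ell)<\infty$ and $T:[0,\ell]\to[0,a]$ is a diffeomorphism. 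Putting $\gamma:=\tilde{\gamma}\circ T^{-1}$, I would then verify that $\gamma(t)$, $t\in[0,a]$, is a geodesic of $\nabla$: this is exactly the computation displayed in Subsection~3.2, now read with the projective equivalence $(\ref{proj})$ taken in the opposite direction (equivalently, with $\sigma$ replaced by $-\sigma$ and the roles of $\nabla$ and $\nabla^{\tilde{g}}$ interchanged), and it is purely local, so no completeness of $\nabla$ is required. By construction $\gamma(0)=p$ and $\gamma(a)=q$, and the affine reparametrization $t\mapsto at$ yields a geodesic of $\nabla$ on $[0,1]$ joining $p$ to $q$, which is geodesic connectedness.

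The one step requiring genuine care is the finiteness of $a=T(\ell)$. An a priori unbounded $\sigma$ could make $e^{-2\sigma}$ grow without control, which would be fatal if one integrated along an unbounded geodesic; here, however, $\tilde{\gamma}$ is a fixed segment on the compact interval $[0,\ell]$, so $\sigma\circ\tilde{\gamma}$ is bounded and the integral converges. This is precisely why Proposition~\ref{main1} needs no hypothesis on $\sigma$, whereas deducing completeness of $\nabla^{\tilde{g}}$ from that of $\nabla$ in Lemma~\ref{lemcom} does require $\sigma$ to be bounded from below.
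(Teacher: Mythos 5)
Your proposal is correct and follows essentially the same route as the paper: apply the classical Hopf--Rinow theorem to $(M,\tilde{g})$ to obtain a connecting $\nabla^{\tilde g}$-geodesic, then reparametrize it via the inverse of the transformation $(\ref{paramtrans})$ into a $\nabla$-geodesic. Your added remarks on the finiteness of the reparametrized interval and on why no boundedness of $\sigma$ is needed are correct elaborations of steps the paper leaves implicit.
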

\begin{proof}
    Let $\nabla^{\tilde{g}}$ be complete, and $p,q\in M$ be arbitrary points. 
    From the classical Hopf-Rinow theorem, there exists a geodesic $\tilde{\gamma}(s),\,s\in[0,1]$ of $\nabla^{\tilde{g}}$ such that $\tilde{\gamma}(0) = p$ and $\tilde{\gamma}(1) = q$.
    Define a parameter transformation $t:[0,1]\to[0,a]$ by
    \begin{equation*}
        t(s) = \int_0^s e^{-2\sigma(\tilde{\gamma}(r))} dr,\quad s\in[0,1).
    \end{equation*}
    In the same manner as the parameter transformation $(\ref{paramtrans})$, the curve $\gamma=\tilde{\gamma}\circ t^{-1}$ is a geodesic of $\nabla$.
    Therefore, we obtain a geodesic $\gamma(t),\,t\in[0,a]$ of $\nabla$, such that $\gamma(0)=p$ and $\gamma(a)=q$.\\
\end{proof}

\begin{example}
    Consider the statistical structure $(g,\nabla)$ on $\mathbb{R}^2$ obtained from the centroaffine immersion in Example \ref{ellipex}.
    The affine connection $\nabla$ is complete, according to Example 5.6 in \cite{BO}.
    Indeed, the Riemannian metric $g^*=e^{-\sigma}g$ is the Euclidean metric on $\mathbb{R}^2$, and the smooth function $-\sigma$ is bounded from below.
    The affine connections $\nabla^{g^*}$ and $\nabla$ are related by
    \begin{equation}
        \label{rel}
        \nabla_X Y = \nabla^{g^*}_X Y - g^*(X,Y)\operatorname{grad}_{g^*}\sigma, \quad X,Y\in\Gamma(TM).
    \end{equation}
    The Levi-Civita connection $\nabla^{g^*}$ is complete, and by Proposition \ref{compconns} the affine connection $\nabla$ is complete.
    By Proposition \ref{main1}, we also have that the conjugate connection $\overline{\nabla}$ is geodesically connected.
    Also, note that $\overline{\nabla}$ is not complete, thus the condition of $\sigma$ being bounded from above is needed in lemma \ref{lemcom}.
    Indeed, since $(M,g,\overline{\nabla})$ is conjugate symmetric, from equation $(\ref{conjcovc})$ the cubic form $\overline{C}$ satisfies
    \begin{equation*}
        (\nabla^g_X \overline{C})(X,X,X) = -\frac{3}{2}\Delta_g\sigma\cdot\left(g(X,X)\right)^2\geq0, \quad X\in\Gamma(TM).
    \end{equation*}
    By Theorem 4.1 of \cite{BO}, the completeness of the affine connection $\overline{\nabla}$ implies that $(g,\overline{\nabla})$ is Riemannian, which is a contradiction. \\
\end{example}

% With Proposition \ref{main1}, we obtain the Hopf-Rinow type theorem on statistical manifolds with divisible cubic forms.\\
\begin{theorem}
    \label{main2}
    Let $(M,g,\nabla)$ be a statistical manifold with divisible cubic form of $C=\operatorname{sym}(d\sigma\otimes g)$, and assume that $\sigma$ is bounded from below.
    \renewcommand{\labelenumi}{$\operatorname{(\theenumi)}$}
    \begin{enumerate}
        \item If $\nabla$ is complete, then $\nabla$ is geodesically connected. \label{geocon1}
        \item If $\nabla^{g}$ is complete, then $\nabla$ is geodesically connected. \label{geocon2}
    \end{enumerate}
\end{theorem}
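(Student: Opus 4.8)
The plan is to route both statements through Proposition \ref{main1}, which already converts completeness of the auxiliary Levi-Civita connection $\nabla^{\tilde g}$ into geodesic connectedness of $\nabla$. Thus in each case it suffices to verify that $\nabla^{\tilde g}$ is complete, after which Proposition \ref{main1} delivers the conclusion at once. The reason for this detour is that $\nabla$, being a general affine connection, need not be geodesically connected even when complete; the classical Hopf-Rinow theorem applies only to the Riemannian connection $\nabla^{\tilde g}$, and the projective equivalence $(\ref{proj})$ together with the reparametrization $(\ref{paramtrans})$ is what transports the connecting geodesic of $\nabla^{\tilde g}$ back to $\nabla$.

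For part $(\ref{geocon1})$, I would argue as follows. Assuming $\nabla$ is complete and using that $\sigma$ is bounded from below by hypothesis, Lemma \ref{lemcom}$(\ref{com1})$ immediately yields that $\nabla^{\tilde g}$ is complete. Proposition \ref{main1} then gives the geodesic connectedness of $\nabla$, completing this case with no further work.

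For part $(\ref{geocon2})$, I would first observe that completeness of $\nabla^g$ means precisely that $(M,g)$ is a complete Riemannian manifold. The conformal change formula $(\ref{contrans})$ expresses $\nabla^{\tilde g}$ in exactly the form required by Proposition \ref{compconns}, namely $\nabla^{\tilde g}_X Y = \nabla^g_X Y + \tfrac12 d\sigma(X) Y + \tfrac12 d\sigma(Y) X - \tfrac12 g(X,Y)\operatorname{grad}_g\sigma$, so the constants are $\alpha_1 = \alpha_2 = \tfrac12$ and $\alpha_3 = -\tfrac12$, with sum $\alpha_1 + \alpha_2 + \alpha_3 = \tfrac12 \geq 0$. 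Since $(M,g)$ is complete and $\sigma$ is bounded from below, Proposition \ref{compconns} applies and shows that $\nabla^{\tilde g}$ is complete; invoking Proposition \ref{main1} once more finishes the proof.

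The only point demanding care is recognizing that geodesic connectedness of $\nabla$ must be obtained indirectly, through completeness of $\nabla^{\tilde g}$, rather than from completeness of $\nabla$ itself — the example following this theorem underscores that completeness alone is the wrong invariant to work with. Once this routing is fixed, both parts are short: part $(\ref{geocon1})$ is a direct application of Lemma \ref{lemcom}, while part $(\ref{geocon2})$ only requires checking that the coefficient sum in the conformal-change formula is nonnegative so that Proposition \ref{compconns} applies. No genuinely hard estimate remains, since all the analytic content has already been absorbed into Proposition \ref{compconns} and Lemma \ref{lemcom}.
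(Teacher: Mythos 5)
Your proposal is correct and follows essentially the same route as the paper: part $(\ref{geocon1})$ via Lemma \ref{lemcom}$(\ref{com1})$ and Proposition \ref{main1}, and part $(\ref{geocon2})$ via equation $(\ref{contrans})$ and Proposition \ref{compconns} to get completeness of $\nabla^{\tilde g}$, then Proposition \ref{main1} again. Your explicit check that $\alpha_1+\alpha_2+\alpha_3=\tfrac12\geq 0$ is a detail the paper leaves implicit but is exactly the intended verification.
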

\begin{proof}
    We prove $(\ref{geocon1})$ first. 
    Define a Riemannian metric $\tilde{g} = e^{\sigma}g$ on $M$.
    By Lemma \ref{lemcom}, we have that $\nabla^{\tilde{g}}$ is complete.
    Therefore, by Proposition \ref{main1} $\nabla$ is geodesically connected.\\
    For $(\ref{geocon2})$ as well, it follows from equation $(\ref{contrans})$ and Proposition \ref{compconns} that $\nabla^{\tilde{g}}$ is complete.\\
\end{proof}

The boundedness of $\sigma$ from below is necessary in Theorem \ref{main2}.\\
\begin{example}
    \label{countex}
    Let $M=\mathbb{R}^2-\{(0,0)\}$ and $g_0$ the Euclidean metric. 
    By \textrm{\cite{04b7b8b8-0f3e-3608-9e2a-b8803be43fe8}}, there exist complete Riemannian metrics on $M$ which are conformal to $g_0$.
    For example, consider the function $\sigma(x^1,x^2)=\frac{2}{(x^1)^2+(x^2)^2}, (x^1,x^2)\in M$ and define $g=e^{\sigma}g_0$.
    Then, $\nabla^g$ is a complete affine connection on $M$. 
    With an affine connection given by the following equation, $(M,g,\nabla)$ is a statistical manifold with divisible cubic form $C=\operatorname{sym}(-d\sigma\otimes g)$$:$
    \begin{equation*}
        \nabla_X Y = \nabla^{g_0}_X Y + d\sigma(X)Y + d\sigma(Y)X, \quad X,Y\in\Gamma(TM).
    \end{equation*}
    The function $-\sigma$ is bounded from above on $M$, thus $\nabla$ is complete from Proposition \ref{compconns}.
    Therefore, on $(M,g,\nabla)$ the affine connections $\nabla$ and $\nabla^g$ are both complete.
    However, the image of any geodesic of $\nabla$ is a straight line in $M$, since $\nabla$ is projectively equivalent to $\nabla^{g_0}$.
    It is obvious that $\nabla$ is not geodesically connected on $M$.\\
\end{example}

\subsection{Completeness of the affine connection and the contrast function}
Since contrast functions do not necessarily satisfy the triangle inequality, it seems difficult to achieve the original Hopf-Rinow theorem completely.
However, there is a way to derive the completeness of the affine connection using the canonical contrast function.\\
% \begin{lemma}
%     \label{lemhopf}
%     Let $(M,g,\nabla)$ be a statistical manifold with divisible cubic form of $C=\operatorname{sym}(d\sigma\otimes g)$.
%     Define $g^*=e^{-\sigma}g$ and assume that $\sigma$ is bounded from above.
%     If $\nabla^{g^*}$ is complete, then $\nabla$ is also complete.\\
% \end{lemma}
% \begin{proof}
%     This follows from Proposition \ref{compconns}, since the affine connections $\nabla$ and $\nabla^{g^*}$ are related as in equation $(\ref{rel})$.
% \end{proof}

% We introduce a notion of a bounded set on a Manifold equipped with a contrast function.\\
\begin{definition}
    Let $\rho$ be a contrast function on a manifold $M$.
    The pair $(M,\rho)$ is said to be \textit{sublevel set complete} if there exists $p\in M$ such that for any $L>0$, the sublevel set
    \begin{equation*}
        C_L(p) = \{q\in M\,|\,\rho(p,q)\leq L\}
    \end{equation*}
    is compact in $M$.\\
\end{definition}

If $\rho$ is a distance function on $M$, then $(M,\rho)$ is a complete metric space if and only if it is sublevel set complete.\\

% \begin{theorem}
%     \label{main3}
%     Let $(M,g,\nabla)$ be a statistical manifold with divisible cubic form of $C=\operatorname{sym}(d\sigma\otimes g)$ and $\rho$ the contrast function given in Theorem $\operatorname{\ref{contfunc}}$.
%     Assume that $\sigma$ is bounded from above.
%     If any closed bounded set of $(M,\rho)$ is compact, then $\nabla$ is complete.\\
% \end{theorem}
\begin{theorem}
    \label{main3}
    Let $(M,g,\nabla)$ be a statistical manifold with divisible cubic form of $C=\operatorname{sym}(d\sigma\otimes g)$ and $\rho$ the contrast function given in Theorem $\operatorname{\ref{contfunc}}$.
    \renewcommand{\labelenumi}{$\operatorname{(\theenumi)}$}
    \begin{enumerate}
        \item If $(M,\rho)$ is sublevel set complete and $\sigma$ is bounded from above, then $\nabla$ is complete. \label{com3}
        \item If $\nabla$ is complete and $\sigma$ is bounded from below, then $(M,\rho)$ is sublevel set complete. \label{com4}
        \item If $(M,\rho)$ is sublevel set complete and $\sigma$ is bounded from below, then $\overline{\nabla}$ is complete. \label{com5}
    \end{enumerate}
\end{theorem}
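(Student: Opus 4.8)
The plan is to channel all three statements through the completeness of the auxiliary metric $\tilde{g}=e^{\sigma}g$, exploiting the fact that the contrast function of Theorem~\ref{contfunc} is built directly from the distance $\tilde{d}$. The first step I would carry out is to identify each sublevel set with a closed metric ball of $(M,\tilde{g})$. Since $\rho(p,q)=e^{-\sigma(p)}\tilde{d}(p,q)^2$ and $e^{-\sigma(p)}$ is a fixed positive constant once $p$ is chosen, one has
\begin{equation*}
    C_L(p)=\{q\in M\mid \tilde{d}(p,q)\le \sqrt{Le^{\sigma(p)}}\}=\overline{B}_{\tilde{d}}(p,\sqrt{Le^{\sigma(p)}}),
\end{equation*}
and as $L$ ranges over $(0,\infty)$ the radius ranges over all of $(0,\infty)$. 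Consequently $(M,\rho)$ is sublevel set complete if and only if, for some $p$, every closed $\tilde{d}$-ball centred at $p$ is compact, which by the classical Hopf--Rinow theorem is equivalent to completeness of $(M,\tilde{g})$, i.e. to completeness of $\nabla^{\tilde{g}}$. This equivalence is the bridge on which all three statements rest.

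With this bridge in hand, statements $(\ref{com3})$ and $(\ref{com4})$ reduce to Lemma~\ref{lemcom}. For $(\ref{com3})$, sublevel set completeness yields completeness of $\nabla^{\tilde{g}}$, and then Lemma~\ref{lemcom}$(\ref{com2})$, whose hypothesis is exactly that $\sigma$ is bounded from above, gives completeness of $\nabla$. For $(\ref{com4})$, Lemma~\ref{lemcom}$(\ref{com1})$ turns completeness of $\nabla$ together with $\sigma$ bounded from below into completeness of $\nabla^{\tilde{g}}$, and the bridge then returns sublevel set completeness of $(M,\rho)$.

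For $(\ref{com5})$ the bridge again gives that $(M,\tilde{g})$ is complete, and the goal is to deduce completeness of $\overline{\nabla}$. The naive attempt to apply Lemma~\ref{lemcom} to $\overline{\nabla}$ and its own associated metric $e^{-\sigma}g$ fails, because passing from $\tilde{g}=e^{\sigma}g$ to $e^{-\sigma}g$ would require $\sigma$ to be bounded from above, whereas here only a lower bound is available; this is the main obstacle. The way around it is to express $\overline{\nabla}$ directly relative to the complete metric $\tilde{g}$ rather than $g$. Combining $\overline{\nabla}_XY=\nabla^g_XY-K_XY$ with equation~$(\ref{difften})$ and the conformal change~$(\ref{contrans})$ gives $\overline{\nabla}_XY=\nabla^{\tilde{g}}_XY+g(X,Y)\operatorname{grad}_g\sigma$, and the key observation is that this particular combination is conformally invariant, namely
\begin{equation*}
    g(X,Y)\operatorname{grad}_g\sigma=\tilde{g}(X,Y)\operatorname{grad}_{\tilde{g}}\sigma,
\end{equation*}
because $\tilde{g}=e^{\sigma}g$ scales the metric by $e^{\sigma}$ and the gradient by $e^{-\sigma}$. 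Hence $\overline{\nabla}_XY=\nabla^{\tilde{g}}_XY+\tilde{g}(X,Y)\operatorname{grad}_{\tilde{g}}\sigma$, which is precisely the form required by Proposition~\ref{compconns} over the complete manifold $(M,\tilde{g})$, with function $\sigma$ and constants $(\alpha_1,\alpha_2,\alpha_3)=(0,0,1)$. Since $\alpha_1+\alpha_2+\alpha_3=1\ge 0$ and $\sigma$ is bounded from below, Proposition~\ref{compconns} yields completeness of $\overline{\nabla}$, finishing $(\ref{com5})$.
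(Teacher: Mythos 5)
Your proposal is correct and follows essentially the same route as the paper: identifying the sublevel sets with closed $\tilde{d}$-balls so that sublevel set completeness is equivalent to completeness of $(M,\tilde{g})$, settling $(1)$ and $(2)$ via Lemma~\ref{lemcom}, and handling $(3)$ via the identity $\overline{\nabla}_XY=\nabla^{\tilde{g}}_XY+\tilde{g}(X,Y)\operatorname{grad}_{\tilde{g}}\sigma$ together with Proposition~\ref{compconns}. Your derivation of that identity (including the conformal invariance of $g(X,Y)\operatorname{grad}_g\sigma$) is a correct expansion of a step the paper leaves implicit.
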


\begin{proof}
    For any $p\in M$ and $L>0$, the closed set $C_L(p)$ is equal to the set
    \begin{equation*}
        \left\{q\in M\,|\,\tilde{d}(p,q)\leq \sqrt{e^{\sigma(p)}L}\right\}.
    \end{equation*}
    Here, $\tilde{d}$ is the distance function of $\tilde{g}=e^{\sigma}g$.
    Thus, $(M,\rho)$ is sublevel set complete if and only if the metric space $(M,\tilde{d})$ is complete.
    Lemma \ref{lemcom} completes the proof for $(\ref{com3})$ and $(\ref{com4})$.
    For $(\ref{com5})$, the connections $\nabla^{\tilde{g}}$ and $\overline{\nabla}$ are related as follows$:$
    \begin{equation*}
        \overline{\nabla}_X Y = \nabla^{\tilde{g}}_X Y + \tilde{g}(X,Y)\operatorname{grad}_{\tilde{g}}\sigma,\quad X,Y\in\Gamma(TM).
    \end{equation*}
    Proposition \ref{compconns} completes the proof.\\
\end{proof}
% \begin{proof}
%     Define $g^*=e^{-\sigma}g$ on $M$, and let $d^*$ be the distance function of $(M,g^*)$.
%     Since $\sigma$ is bounded from above, take an $L\in\mathbb{R}$ such that $e^{\sigma}\leq L$ on $M$.
%     Fix an arbitrary closed bounded set $A\subset M$ of the metric space $(M,d^*)$ and define $L_A = \operatorname{sup}\{d^*(p,q)\,|\,p,q\in A\}$.
%     Then, for any $p,q\in A$, we have
%     \begin{equation*}
%         \rho(p,q) = e^{\sigma(q)}(d^*(p,q))^2 < L\cdot L_A^2.
%     \end{equation*}
%     Thus, $A$ must be a compact set. 
%     It follows that $(M,d^*)$ is a complete metric space, and $\nabla^{g^*}$ is complete.
%     Therefore, from Lemma \ref{lemhopf} the affine connection $\nabla$ is complete.\\
% \end{proof}

\begin{example}
    Consider an ellipsoid $f:M^n\to\mathbb{R}^{n+1}$, that is, a locally strongly convex centroaffine immersion from a compact manifold $M$ such that the image is an ellipsoid.
    With the statistical structure $(g,\nabla)$ induced on $M$, the statistical manifold $(M,g,\nabla)$ has divisible cubic form.
    From Theorem \ref{main3}, the affine connection $\nabla$ is complete. Moreover, from Theorem \ref{main2}, it is geodesically connected.\\
\end{example}

\section{The Cartan-Hadamard type theorem on a statistical manifold with divisible cubic form}
If an affine connection is geodesically connected, the next step is to analyze how many geodesics connect the same two points on the manifold. 
That is, we examine the injectivity of the exponential map at each point.
The following Cartan-Hadamard type theorem on statistical manifolds with divisible cubic forms provides a condition that ensures any two points on the manifold are connected by a unique geodesic.\\
\begin{theorem}
    \label{cartanh}
    Let $(M,g,\nabla)$ be a statistical manifold with divisible cubic form of $C=\operatorname{sym}(d\sigma\otimes g)$.
    Assume that $\sigma$ is bounded from below and $\nabla$ is complete.
    If for each pair of orthonormal vectors $X,Y$,
    \begin{equation}
        \label{careq}
        2g\left(S(X,Y)Y,X\right) - \left((\nabla^g_Y d\sigma)(Y) + (\nabla^g_X d\sigma)(X) + \|d\sigma\|^2_g\right)\leq 0
    \end{equation}
    holds for the statistical curvature tensor $S$ of $(M,g,\nabla)$.
    Then, for any $p$ the exponential map $\operatorname{exp}_p:T_pM\to M$ of $\nabla$ is a covering map.
    In particular, if $M$ is simply connected, then $\operatorname{exp}_p$ is a diffeomorphism at each $p\in M$.\\
\end{theorem}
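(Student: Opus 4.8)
The plan is to transfer the classical Cartan--Hadamard theorem from the projectively equivalent Levi-Civita connection $\nabla^{\tilde g}$ (with $\tilde g=e^{\sigma}g$) to $\nabla$. Three ingredients are needed: completeness of $(M,\tilde g)$, non-positivity of its sectional curvature, and a diffeomorphic comparison of the two exponential maps. First I would observe that, since $\nabla$ is complete and $\sigma$ is bounded from below, Lemma~\ref{lemcom}~(\ref{com1}) shows $\nabla^{\tilde g}$ is complete, so $(M,\tilde g)$ is a complete Riemannian manifold.

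Next I would read off the sign of the sectional curvature of $\tilde g$ from equation~(\ref{seccurvtilde}). For a $g$-orthonormal pair $X,Y$, the left-hand side of (\ref{seccurvtilde}) is precisely the $\tilde g$-sectional curvature of the $2$-plane $\operatorname{span}(X,Y)$, since sectional curvature is basis independent, while its right-hand side equals $e^{\sigma}$ times one half of the left-hand side of (\ref{careq}). As every $2$-plane admits a $g$-orthonormal basis and $e^{\sigma}>0$, hypothesis~(\ref{careq}) is exactly equivalent to $(M,\tilde g)$ having non-positive sectional curvature. The classical Cartan--Hadamard theorem then gives that $\operatorname{exp}^{\tilde g}_p:T_pM\to M$ is a covering map for every $p$, and a diffeomorphism when $M$ is simply connected.

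It remains to compare $\operatorname{exp}^{\nabla}_p$ with $\operatorname{exp}^{\tilde g}_p$. For $v\in T_pM$ let $\gamma_v$ be the $\nabla$-geodesic with $\gamma_v(0)=p$, $\gamma_v'(0)=v$, defined on all of $\mathbb{R}$ by completeness. Reparametrizing by (\ref{paramtrans}) turns $\gamma_v$ into a $\nabla^{\tilde g}$-geodesic with initial velocity $e^{-2\sigma(p)}v$, yielding
\begin{equation*}
\operatorname{exp}^{\nabla}_p(v)=\operatorname{exp}^{\tilde g}_p\bigl(\Psi(v)\bigr),\qquad \Psi(v)=\mu(v)\,v,\quad \mu(v)=e^{-2\sigma(p)}\int_0^1 e^{2\sigma(\gamma_v(r))}\,dr.
\end{equation*}
Using the homogeneity $\gamma_{tu}(r)=\gamma_u(tr)$ of affine geodesics, along the ray $v=tu$ this reduces to $\Psi(tu)=h_u(t)\,u$ with $h_u(t)=e^{-2\sigma(p)}\int_0^t e^{2\sigma(\gamma_u(\rho))}\,d\rho$. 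Since $h_u'(t)=e^{-2\sigma(p)}e^{2\sigma(\gamma_u(t))}>0$ and, because $\sigma$ is bounded from below, $h_u(t)\to\infty$ as $t\to\infty$, each $h_u$ is a smooth increasing bijection of $[0,\infty)$, so $\Psi$ is a radial bijection of $T_pM$ fixing the origin. Smooth dependence of geodesics on initial data makes $\mu$, hence $\Psi$, smooth, with $\mu(0)=1$ and $D\Psi_0=\operatorname{Id}$; a short computation shows $D\Psi_v$ is invertible for every $v$, since any kernel vector must be radial while the radial derivative is $h_u'(t)>0$. Thus $\Psi$ is a diffeomorphism of $T_pM$.

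Finally, $\operatorname{exp}^{\nabla}_p=\operatorname{exp}^{\tilde g}_p\circ\Psi$ is the composition of a diffeomorphism with a covering map, hence a covering map; when $M$ is simply connected $\operatorname{exp}^{\tilde g}_p$ is already a diffeomorphism, so $\operatorname{exp}^{\nabla}_p$ is too. I expect the main obstacle to be this last comparison: verifying that the radial reparametrization $\Psi$ is a genuine diffeomorphism, in particular its smoothness and invertibility at the origin and the surjectivity of each $h_u$, where the lower bound on $\sigma$ is precisely what is used.
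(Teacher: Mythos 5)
Your proposal is correct, and the first two steps (completeness of $\nabla^{\tilde g}$ via Lemma~\ref{lemcom}~(\ref{com1}), and nonpositive $\tilde g$-sectional curvature by feeding (\ref{careq}) into (\ref{seccurvtilde})) coincide with the paper's. Where you genuinely diverge is the transfer from $\operatorname{exp}_p^{\tilde g}$ to $\operatorname{exp}_p$ of $\nabla$: the paper handles the simply connected case by proving surjectivity and injectivity separately --- surjectivity via Theorem~\ref{main2} (geodesic connectedness) and injectivity by reparametrizing two $\nabla$-geodesics with the same endpoints into $\nabla^{\tilde g}$-geodesics and using that $\operatorname{exp}_p^{\tilde g}$ is a diffeomorphism --- and then treats the general case by lifting everything to the universal cover and chasing the commutative diagram relating the two exponential maps. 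You instead exhibit the explicit factorization $\operatorname{exp}_p = \operatorname{exp}_p^{\tilde g}\circ\Psi$ through a radial map $\Psi(v)=\mu(v)v$ built from the parameter change (\ref{paramtrans}), and verify directly that $\Psi$ is a diffeomorphism of $T_pM$ (smoothness and $D\Psi_0=\operatorname{Id}$ from smooth dependence on initial data, injectivity of $D\Psi_v$ from the radial monotonicity $h_u'(t)>0$, surjectivity of each $h_u$ from the lower bound on $\sigma$). This buys a uniform treatment of the simply connected and non-simply-connected cases in one stroke, makes no appeal to Theorem~\ref{main2} or to the universal cover, and in addition identifies the fibers of $\operatorname{exp}_p$ with those of $\operatorname{exp}_p^{\tilde g}$; the price is the extra bookkeeping needed to certify that $\Psi$ is a genuine diffeomorphism, which you carry out correctly. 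The paper's route is more modular, reusing results already established, but both arguments are sound.
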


\begin{proof}
    First, we will prove the theorem under the assumption that $M$ is simply connected.
    From Lemma \ref{lemcom}, the Levi-Civita connection $\nabla^{\tilde{g}}$ is complete, where $\tilde{g}=e^{\sigma}g$.
    Let $\tilde{R}$ be the curvature tensor field of $\nabla^{\tilde{g}}$.
    By substituting equation $(\ref{careq})$ into equation $(\ref{seccurvtilde})$, the complete Riemannian manifold $(M,\tilde{g})$ has nonpositive sectional curvature.
    Hence, from the classical Cartan-Hadamard theorem, the exponential map $\bwidetilde{\operatorname{exp}}_p:T_pM\to M$ of $\nabla^{\tilde{g}}$ at each $p\in M$ is a diffeomorphism.
    Now, from Theorem $\ref{main2}$, the affine connection $\nabla$ is geodesically connected, so the exponential map $\operatorname{exp}_p:T_pM\to M$ of $\nabla$ is surjective at each $p\in M$.
    We will prove that the exponential maps are also injective.
    Assume that two vectors $v_1,v_2\in T_pM$ satisfies $\operatorname{exp}_p(v_1)=\operatorname{exp}_p(v_2)=q$.
    Set $\gamma_i(t) = \operatorname{exp}_p(tv_i),\,t\in[0,1]$ for each $i=1,2$.
    These curves are geodesics of $\nabla$ that connects the points $p$ and $q$.
    Hence, in the same manner as the parameter transformation $(\ref{paramtrans})$, the geodesics $\gamma_1,\gamma_2$ can be reparameterized into geodesics of $\nabla^{\tilde{g}}$.
    Since the exponential maps of $\nabla^{\tilde{g}}$ are diffeomorphisms, the geodesics $\gamma_1,\gamma_2$ must coincide, implying that $v_1=v_2$.\\

    Next, we will prove the theorem for the case when $M$ is not necessarily simply connected.
    Let $\varpi:M'\to M$ be the universal covering of $M$.
    Since $\varpi$ is an immersion, we define a Riemannian metric $g'$ and an affine connection $\nabla'$ by $g' = \varpi^*\tilde{g}$ and
    \begin{equation*}
        \varpi_*\nabla'_X Y = \nabla_{\varpi_*X} \varpi_*Y, \quad X,Y\in\Gamma(TM').
    \end{equation*}
    The affine connections $\nabla'$ and $\nabla^{g'}$ are both complete on $M'$.
    Indeed, if $\gamma(t),\,\in[0,a)$ is a geodesic of $\nabla'$ with $a<\infty$, then $\varpi\circ\gamma$ is a geodesic of $\nabla$.
    The affine connection $\nabla$ is complete, so the curve $\varpi\circ\gamma$ can be extended beyond $a$ as a geodesic of $\nabla$.
    Since $\varpi$ is locally a diffeomorphism, the curve $\gamma$ can be extended beyond $a$ as a geodesic of $\nabla'$.
    From the classical Hopf-Rinow theorem, we know that $\nabla^{g'}$ is geodesically connected on $M'$.
    Define a 1-form $\omega$ on $M'$ by $\omega = \varpi^*d\sigma$.
    The 1-form $\omega$ is an exact form since $\omega=d(\sigma\circ\varpi)$.
    We also have
    \begin{equation*}
        \nabla^{g'}_X Y = \nabla'_X Y + \omega(X)Y + \omega(Y)X,\quad X,Y\in\Gamma(TM'),
    \end{equation*}
    thus, the affine connections $\nabla^{g'}$ and $\nabla'$ are projectively equivalent, and $\nabla'$ is also geodesically connected on $M'$.
    By following the first part of this proof, we conclude that the exponential map $\operatorname{exp}_p':T_p M'\to M'$ of $\nabla'$ is a diffeomorphism at each $p\in M'$.
    The following diagram for the exponential map $\operatorname{exp}_p$ of $\nabla$ on $p\in M$, where $\varpi(p')=p$ commutes$:$
    \[
    \begin{CD}
     T_{p'}M' @>{\operatorname{exp}_{p'}'}>> M' \\
    @V{\varpi_*}VV    @V{\varpi}VV \\
     T_p M   @>{\operatorname{exp}_{p}}>>  M
    \end{CD}
    \]
    Thus, the mapping $\varpi_*\circ\operatorname{exp}_{p}:T_{p'}M\to M$ is a covering map.
    Since $\varpi_*$ is an isomorpshim, the exponential map $\operatorname{exp}_p:T_pM\to M$ of $\nabla$ is a covering map at each $p\in M$.\\
    \end{proof}

    \begin{cor}
        \label{corch}
        Let $(M,g,\nabla)$ be a statistical manifold with divisible cubic form of $C=\operatorname{sym}(d\sigma\otimes g)$.
        Assume that $(M,g)$ is a $2$ dimensional Riemannian manifold, the function $\sigma$ is bounded from below, and $\nabla$ is complete.
        If the inequality
        \begin{equation}
            \label{careq2}
            2k^g\leq \Delta_g\sigma
        \end{equation}
        holds for the sectional curvature $k^g$ of $(M,g)$, then for any $p$ the exponential map $\operatorname{exp}_p:T_pM\to M$ of $\nabla$ is a covering map.\\
    \end{cor}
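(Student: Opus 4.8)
The plan is to treat the statement as the two-dimensional specialization of Theorem \ref{cartanh}: its hypotheses ($\sigma$ bounded from below, $\nabla$ complete) are already those of Theorem \ref{cartanh}, so the only thing to establish is that, when $\dim M = 2$, the scalar inequality (\ref{careq2}) forces the pointwise curvature condition (\ref{careq}) to hold for every orthonormal pair. Once that implication is secured, the conclusion that $\operatorname{exp}_p$ is a covering map is exactly what Theorem \ref{cartanh} delivers. So first I would fix $p\in M$ and an orthonormal pair $X,Y\in T_pM$; since $\dim M=2$, such a pair is unique up to rotation and reflection and spans all of $T_pM$, so (\ref{careq}) need only be checked for this single pair, and $g(R^g(X,Y)Y,X)$ is literally the Gauss curvature $k^g$ at $p$.

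Next I would carry out the two simplifications special to dimension two. First, for an orthonormal pair $\{X,Y\}$ the sum $(\nabla^g_X d\sigma)(X)+(\nabla^g_Y d\sigma)(Y)$ is the full trace of the Hessian $\nabla^g d\sigma$ and hence equals $\Delta_g\sigma$; this is precisely what collapses the two separate Hessian contributions in (\ref{careq}) into the single Laplacian appearing in (\ref{careq2}). Second, I would evaluate $g(S(X,Y)Y,X)$ in terms of $k^g$ and $\|d\sigma\|_g^2$. The direct route writes $S=R^g-[K_X,K_Y]$ from (\ref{addcurv}) and inserts the explicit difference tensor (\ref{difften}), using $\operatorname{grad}_g\sigma=d\sigma(X)X+d\sigma(Y)Y$ in the orthonormal frame, to reduce $g(S(X,Y)Y,X)$ to $k^g$ plus a fixed multiple of $\|d\sigma\|_g^2$. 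As a cross-check I would note that the right-hand side of (\ref{seccurvtilde}) is the Gauss curvature $\tilde{k}$ of $(M,\tilde{g})$, which by the standard conformal change formula in dimension two equals $e^{-\sigma}(k^g-\tfrac12\Delta_g\sigma)$. Substituting both simplifications, condition (\ref{careq}) becomes a scalar inequality of the shape $2k^g \le \Delta_g\sigma + c\,\|d\sigma\|_g^2$ for a universal constant $c$.

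Finally I would assemble these. As in the proof of Theorem \ref{cartanh}, (\ref{careq}) is equivalent to $(M,\tilde{g})$ having nonpositive Gauss curvature, i.e. $\tilde{k}\le 0$, which by the conformal formula above means $2k^g\le\Delta_g\sigma$. Since $\|d\sigma\|_g^2\ge 0$, the assumed inequality (\ref{careq2}) gives $2k^g\le\Delta_g\sigma-\|d\sigma\|_g^2\le\Delta_g\sigma$, so (\ref{careq}) holds at every point; the remaining hypotheses are identical to those of Theorem \ref{cartanh}, which then yields that $\operatorname{exp}_p$ is a covering map at each $p$. The one step requiring genuine care is the curvature identification of the previous paragraph: one must track the factor $\tfrac12$ entering $[K_X,K_Y]$ (equivalently, the conformal weight in the Gauss-curvature change) so that the $\|d\sigma\|_g^2$ terms combine correctly. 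Everything else is a formal substitution, and because (\ref{careq2}) is the strongest of the candidate scalar inequalities, the final implication is immediate from the positivity of $\|d\sigma\|_g^2$.
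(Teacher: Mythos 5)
Your proposal is correct and follows essentially the same route as the paper: both reduce the corollary to Theorem \ref{cartanh} by checking that in dimension two the trace identity $(\nabla^g_Xd\sigma)(X)+(\nabla^g_Yd\sigma)(Y)=\Delta_g\sigma$ and the evaluation of $g(S(X,Y)Y,X)$ for an orthonormal pair turn the scalar hypothesis $(\ref{careq2})$ into the curvature condition $(\ref{careq})$. The only difference is bookkeeping: the paper substitutes into its displayed formula for $S$ and finds that $(\ref{careq2})$ coincides with $(\ref{careq})$, whereas your evaluation via $[K_X,K_Y]$ and the conformal Gauss-curvature cross-check identifies $(\ref{careq})$ with $2k^g\leq\Delta_g\sigma$ and then discards the nonnegative term $\|d\sigma\|_g^2$ --- either way the hypotheses of Theorem \ref{cartanh} are verified and the conclusion follows.
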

    \begin{proof}
        Let $k^g$ be the sectional curvature of $(M,g)$.
        The statistical curvature tensor field of $(M,g,\nabla)$ is
        \begin{equation*}
            \begin{split}
                S(X,Y)Z &= R^{g}(X,Y)Z + \frac{1}{2}\left(d\sigma(Y)d\sigma(Z)X-d\sigma(X)d\sigma(Z)Y\right)\\
                &+\frac{1}{2}\left(g(Y,Z)(\|d\sigma\|_g^2X+d\sigma(X)\operatorname{grad}_g\sigma)-g(X,Z)(\|d\sigma\|_g^2Y+d\sigma(Y)\operatorname{grad}_g\sigma)\right),
            \end{split}
        \end{equation*}
        for $X,Y,Z\in\Gamma(TM)$.
        Thus, from the assumption $(\ref{careq2})$, for orthonormal frames $(X,Y)$ on $(M,g)$  we have
        \begin{equation*}
            \begin{split}
                2g(S(X,Y)Y,X) &= 2k^g + \left(d\sigma(X)d\sigma(X)+d\sigma(Y)d\sigma(Y)+\|d\sigma\|_g^2\right)\\
                &= 2k^g + \|d\sigma\|_g^2\\
                &\leq \|d\sigma\|_g^2 + \Delta_g\sigma\\
                &= \|d\sigma\|_g^2 + (\nabla^g_Xd\sigma)(X) + (\nabla^g_Yd\sigma)(Y).
            \end{split}
        \end{equation*}
        This implies that the equation $(\ref{careq})$ is satisfied on the statistical manifold $(M,g,\nabla)$.
        Therefore, Theorem \ref{cartanh} completes this proof.
    \end{proof}

    \begin{example}
        Let $(\mathbb{H}^2,g)$ be the upper half plane with the standard Riemannian metric.
        With a bounded function $\sigma(x,y) = e^{-y}, (x,y)\in\mathbb{H}^2$ on $\mathbb{H}^2$, we obtain a statistical manifold $(\mathbb{H}^2,g,\nabla)$ with
        \begin{equation*}
            \nabla = \nabla^{g} + K
        \end{equation*}
        where $K$ is the $(1,2)$-tensor defined by $(\ref{difften})$. 
        Since $\sigma$ is a bounded function on the complete Riemannian manifold ($\mathbb{H}^2$,g), by Proposition \ref{compconns} $\nabla$ is complete.
        For any $(x,y)\in\mathbb{H}^2$, we have 
        \begin{equation*}
            (\|d\sigma\|_g(x,y))^2 = y^2e^{-2y},
        \end{equation*}
        and
        \begin{equation*}
            (\Delta_g\sigma)(x,y) = y^2e^{-y}.
        \end{equation*}
        By Corollary \ref{corch}, the exponential map $\operatorname{exp}_p:T_p\mathbb{H}^2\to\mathbb{H}^2$ of $\nabla$ is a diffeomorphism at each $p\in\mathbb{H}^2$.
    \end{example}

\subsection*{Acknowledgments}
The author would like to express their gratitude to Professor Hitoshi Furuhata and Dr. Hirotaka Kiyohara for their valuable support and insightful discussions during the course of this research.

\bmhead{Funding}
This work was supported by the Japan Science and Technology agency, the establishment of university fellowships towards the creation of science technology innovation, Grant Number JPMJSP2119.

% \bmhead{Data availability}
% No data was gathered for this article.

% \bmhead{Declarations}

% \bmhead{Conflict of interest}
% The authors declare that they have no conflict of interest.

\bibliography{sn-bibliography}% common bib file
%% if required, the content of .bbl file can be included here once bbl is generated
%%\input sn-article.bbl

\end{document}